\newtheorem{theorem}{Theorem}[section]
\newtheorem{lemma}[theorem]{Lemma}
\newtheorem*{lemma*}{Lemma}
\newtheorem{proposition}[theorem]{Proposition}
\newtheorem{corollary}[theorem]{Corollary}
\theoremstyle{definition}
\newtheorem{definition}[theorem]{Definition}
\newtheorem{example}[theorem]{Example}
\newtheorem{conjecture}[theorem]{Conjecture}
\theoremstyle{remark}
\newtheorem{remark}[theorem]{Remark}
\numberwithin{equation}{section}
\newcommand{\abs}[1]{\lvert#1\rvert}
\newcommand{\norm}[1]{\lVert#1\rVert}
\newcommand{\A}{\mathbb{A}}
\newcommand{\C}{\mathbb{C}}
\newcommand{\D}{\partial}
\newcommand{\K}{\mathcal{K}}
\newcommand{\LL}{\mathcal{L}}
\newcommand{\R}{\mathbb{R}}
\newcommand{\Z}{\mathbb{Z}}
\newcommand{\Ho}{\mathscr{H}}
\newcommand{\Ha}{\mathcal{H}}
\newcommand{\As}{\mathcal{A}}
\newcommand{\const}{\mathrm{const}}
\newcommand{\dtext}{\textnormal d}
\DeclareMathOperator{\re}{Re}
\DeclareMathOperator{\im}{Im}
\DeclareMathOperator{\Mod}{Mod}
\def\Xint#1{\mathchoice
{\XXint\displaystyle\textstyle{#1}}%
{\XXint\textstyle\scriptstyle{#1}}%
{\XXint\scriptstyle\scriptscriptstyle{#1}}%
{\XXint\scriptscriptstyle\scriptscriptstyle{#1}}%
\!\int}
\def\XXint#1#2#3{{\setbox0=\hbox{$#1{#2#3}{\int}$}
\vcenter{\hbox{$#2#3$}}\kern-.5\wd0}}
\def\dashint{\Xint-}
\def\le{\leqslant}
\def\ge{\geqslant}
\begin{document}

\title[Harmonic mappings of an annulus]{Harmonic mappings of an annulus,\\ Nitsche conjecture and its generalizations}

\author{Tadeusz Iwaniec}
\address{Department of Mathematics, Syracuse University, Syracuse,
NY 13244, USA}
\email{tiwaniec@syr.edu}
\thanks{Iwaniec was supported by the NSF grant DMS-0800416.}

\author{Leonid V. Kovalev}
\address{Department of Mathematics, Syracuse University, Syracuse,
NY 13244, USA}
\email{lvkovale@syr.edu}
\thanks{Kovalev was supported by the NSF grant DMS-0913474.}

\author{Jani Onninen}
\address{Department of Mathematics, Syracuse University, Syracuse,
NY 13244, USA}
\email{jkonnine@syr.edu}
\thanks{Onninen was supported by the NSF grant  DMS-0701059.}

%    General info
\subjclass[2000]{Primary 31A05; Secondary 58E20, 30C20}

\date{March 15, 2009}

\keywords{Nitsche conjecture, Harmonic mappings}

\begin{abstract}
As long ago as 1962 Nitsche \cite{Ni} conjectured that a harmonic homeomorphism $h \colon A(r,R)  \overset{\textnormal{\tiny{onto}}}{\longrightarrow} A(r_\ast, R_\ast)$ between planar annuli exists if and only if
$ \frac{R_\ast}{r_\ast} \ge \frac{1}{2} \left(\frac{R}{r} + \frac{r}{R}\right)$.
We prove this conjecture when the domain annulus is not too wide; explicitly, when $\log \frac{R}{r} \le \frac{3}{2}$. For general $A(r,R)$ the conjecture is proved under additional assumption that either
$h$ or its normal derivative have vanishing average on the inner boundary circle.
This is the case for the critical Nitsche mapping which yields equality in the above inequality. The Nitsche mapping represents so-called free evolution of circles of the annulus $A(r,R)$. It will be shown on the other hand that forced harmonic evolution results in greater ratio $\frac{R_\ast}{r_\ast}$. To this end, we introduce the underlying differential operators for the circular means of the forced evolution and use them to obtain sharp lower bounds of $\frac{R_\ast}{r_\ast}$.
\end{abstract}
\maketitle
 \tableofcontents
\section{Introduction and Overview}
The Riemann Mapping Theorem tells us that planar simply connected domains (different from $\C$) are conformally equivalent. Annuli are the first place one meets obstructions to the existence of conformal mappings. The famous theorem, due to Schottky (1877), \cite{Sc} asserts that  an annulus
\begin{equation}
\mathbb A =   A (r,R)= \{z \in \mathbb C \colon r<|z|<R\}\, , \quad \quad 0<r<R
\end{equation}
can be mapped conformally onto the annulus
\begin{equation}
\mathbb A^\ast =   A (r_\ast,R_\ast)= \{w \in \mathbb C \colon r_\ast<|w|<R_\ast\}\, , \quad \quad 0<r_\ast<R_\ast
\end{equation}
if and only if $\Mod \A := \log \frac{R}{r} = \log \frac{R_\ast}{r_\ast}=:\Mod \A^\ast$; that is,
\begin{equation}
\frac{R}{r}= \frac{R_\ast}{r_\ast}.
\end{equation}
Moreover, modulo rotation, every conformal mapping $h \colon \mathbb A \to \mathbb A^\ast$ takes the form
\begin{equation}
h(z)= \frac{r^\ast}{r} z \quad \mbox{ or } \quad h(z) = \frac{r^\ast R}{z}.
\end{equation}
Note that the latter map, though orientation preserving, reverses the order of the boundary circles.
Such a mapping problem becomes more flexible if we  admit harmonic mappings $h \colon \mathbb A \overset{\textnormal{\tiny{onto}}}{\longrightarrow} \mathbb A^\ast$ in which the real and imaginary parts need not be harmonic conjugate. A univalent (one-to-one) complex-valued harmonic function will be referred to as  harmonic homeomorphism. We denote by $\mathcal H (\A, \A^\ast)$ the class of orientation preserving harmonic homeomorphisms
 $ h \colon {\mathbb A} \overset{\textnormal{\tiny{onto}}}{\longrightarrow} {\mathbb A}^\ast$ which preserve the order of the boundary circles, see Section~\ref{background} for a brief discussion of annuli and homeomorphisms between them.  For a recent account of the theory of harmonic mappings we refer to the book by P. Duren \cite{Dub}.

J. C. C. Nitsche \cite{Ni} showed that the annulus $\mathbb A$ cannot be mapped by a harmonic homeomorphism onto $\mathbb A^\ast$ if the target is conformally too thin compared to $\A$. Let us devote a few lines to  a simple proof of this fact via  normal family arguments. Suppose, to the contrary, that we are given harmonic homeomorphisms $h_j \colon \A \overset{\textnormal{\tiny{onto}}}{\longrightarrow} A(1,r_j)$, where $r_j \searrow 1$. Let $h \colon \A \to \C$ denote a $c$-uniform limit of this sequence. We have $\abs{h(z)}^2 \equiv 1$ on $\A$. Hence $0= \Delta \abs{h}^2 = 4 \frac{\partial^2}{\partial z \partial \bar z} (h\bar h) = \abs{h_z}^2 + \abs{h_{\bar z}}^2$. This implies that $h$ is constant. On the other hand,  we fix a circle
 \[C_{\rho}:=\{z\colon \abs{z}=\rho\} \quad \mbox{ where } r < \rho < R\]
and note that the winding number of $h_j$   equals  $1$, namely
  \[\frac{1}{2 \pi i \rho} \int_{C_{\rho}} \frac{1}{h_j} \frac{\partial h_j}{\partial \theta} =1.\]
  Passing to the limit as $r_j\searrow 1$,  we arrive at the desired contradiction. This proof does not provide us with any  bound for $\Mod \mathbb A^\ast$.
\begin{conjecture} [Nitsche~\cite{Ni}] An annulus $\mathbb A =  A(r,R)$ can be mapped by a harmonic homeomorphism onto the annulus $\mathbb A^\ast =   A (r_\ast,R_\ast)$ only if
\begin{equation}\label{nitschebound}
\frac{R^\ast}{r^\ast} \ge \frac{1}{2} \left( \frac{R}{r}+ \frac{r}{R}\right).
\end{equation}
We call it the {\it Nitsche bound}.
\end{conjecture}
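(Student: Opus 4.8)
The plan is to reduce \eqref{nitschebound} to a one--dimensional comparison principle for the \emph{circular means} of $h$. Normalize $r=1$, so that we must show $\tfrac{R_\ast}{r_\ast}\ge\tfrac12\!\left(R+\tfrac1R\right)$, and pass to the variable $t=\log|z|$, in which the Euler operator $\LL\varphi:=s^2\varphi''+s\varphi'-\varphi$ becomes simply $\LL\varphi=\ddot\varphi-\varphi$, the overdots denoting differentiation in $t$; its null solutions are $s^{\pm1}=e^{\pm t}$. I would begin with the Fourier expansion on the circle $|z|=s$,
\[
h(z)=\alpha\log s+\sum_{n\in\Z}h_n(s)\,e^{in\theta},\qquad h_n(s)=a_n s^n+\overline{b_{-n}}\,s^{-n},
\]
arising from the Laurent decomposition $h=\sum a_nz^n+\sum\overline{b_n}\,\overline z^{\,n}+\alpha\log|z|$. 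Each mode solves $s^2h_n''+sh_n'-n^2h_n=0$; in particular the first mean satisfies $\LL h_1=0$. This is the differential operator that governs the ``free evolution'' of concentric circles --- the evolution realized \emph{exactly} by the critical Nitsche mapping $z\mapsto\tfrac{r_\ast}2\!\left(z+\tfrac1{\bar z}\right)$, whose sole nonzero mode is $h_1(s)=\tfrac{r_\ast}2\!\left(s+\tfrac1s\right)$.

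Next I would introduce the \emph{area radius} $\rho(s):=\sqrt{a(s)/\pi}$, where $a(s)$ is the area enclosed by the Jordan curve $h(C_s)$. By Lewy's theorem $J_h=|h_z|^2-|h_{\bar z}|^2>0$, so $a(s)=\pi r_\ast^2+\int_{A(1,s)}J_h$ is strictly increasing with $a(1)=\pi r_\ast^2$, $a(R)=\pi R_\ast^2$; hence $\rho(1)=r_\ast$, $\rho(R)=R_\ast$, and $\dot\rho\ge0$, in particular $\dot\rho(0)\ge0$. The shoelace (divergence) formula and Parseval's identity give the closed form
\[
a(s)=\pi\sum_{n\in\Z}n\,|h_n(s)|^2,
\]
from which $\dot a$, $\ddot a$, and $\LL\rho$ are computable explicitly in terms of the Laurent data. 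For the critical Nitsche mapping $\rho(s)=\tfrac{r_\ast}2\!\left(s+\tfrac1s\right)$, so $\LL\rho=0$ there --- the equality case, and the structural sanity check.

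The crux is the differential inequality
\[
\LL\rho\ \ge\ 0\qquad\text{on }(1,R),
\]
equivalently $2a\ddot a-(\dot a)^2\ge 4a^2$ in the variable $t$. Inserting the formula for $a(s)$ turns this into the nonnegativity of an explicit quadratic form in $\{a_n,b_n\}$, which is \emph{not} automatic: the zero mode together with the logarithmic part (more precisely, the mixed low--high mode interactions) contributes an indefinite term. This is where the structural input is spent --- the pointwise inequality $J_h>0$ (equivalently: $zh_z$ is holomorphic, $\bar z h_{\bar z}$ anti--holomorphic, and $|zh_z|>|\bar z h_{\bar z}|$), together with \emph{either} the hypothesis $\log R\le\tfrac32$, which makes the coefficients $e^{\pm2nt}$ of the higher modes ($|n|\ge2$) small enough to be absorbed, \emph{or} the vanishing of the average of $h$, resp.\ of its normal derivative, on $C_1$, which forces $a_0+\overline{b_0}=0$, resp.\ $\alpha=0$, killing precisely the offending mode. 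I expect this step to be the main obstacle: everything else is soft, but $\LL\rho\ge0$ is exactly where a bound on the width of $\A$, or the extra boundary condition, becomes genuinely necessary.

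Granting $\LL\rho\ge0$, the proof ends with a one--line maximum principle. In the variable $t$ let $\psi_0(s)=\tfrac{r_\ast}2\!\left(s+\tfrac1s\right)=r_\ast\cosh t$ be the solution of $\ddot\psi_0=\psi_0$ with $\psi_0=r_\ast$, $\dot\psi_0=0$ at $t=0$, and put $w=\rho-\psi_0$. Then $w=0$ and $\dot w=\dot\rho\ge0$ at $t=0$, while $\ddot w=\ddot\rho-\ddot\psi_0=(\LL\rho+\rho)-\psi_0=\LL\rho+w\ge w$; from $\ddot w\ge w$, $w(0)=0$, $\dot w(0)\ge0$ one gets $\dot w\ge w$ (integrating factor $e^{t}$) and then $w\ge0$ (integrating factor $e^{-t}$) on $[0,\log R]$. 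Evaluating at $s=R$ gives $R_\ast=\rho(R)\ge\psi_0(\log R)=\tfrac{r_\ast}2\!\left(R+\tfrac1R\right)$ --- the Nitsche bound \eqref{nitschebound}, in unnormalized form $\tfrac{R_\ast}{r_\ast}\ge\tfrac12\!\left(\tfrac Rr+\tfrac rR\right)$ --- with equality precisely for the critical Nitsche mapping. A small amount of care at the outset is needed to justify that $h$ extends continuously to $\partial\A$ with $|h|\equiv r_\ast$ on $C_1$ and $|h|\equiv R_\ast$ on $C_R$ (or else one runs the comparison on $(1,R)$ and passes to the limit); boundary regularity of this kind for harmonic homeomorphisms is standard.
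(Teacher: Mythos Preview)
Your overall architecture --- a second-order ODE for a radial mean, equality for the critical Nitsche map, then a comparison principle --- is exactly right and parallels the paper. But you have chosen a different radial quantity than the paper does, and this choice creates a genuine gap at the one place you flag as ``the main obstacle.''

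The paper works with the \emph{quadratic mean} $U(\rho)=\dashint_{C_\rho}\abs{h}^2$ and its variance $V=U-\bigl|\dashint_{C_\rho}h\bigr|^2$, and introduces a family of operators $\LL^\lambda$ in divergence form. The decisive technical fact (Proposition~\ref{varprop1}) is that $\LL^\lambda[V]\ge 0$ holds \emph{pointwise for every harmonic function}; the proof is a short mode-by-mode computation using~\eqref{uc4}, and it works precisely because $V$ omits the $n=0$ mode while the remaining inequality is \emph{linear} in the $U_n$. The hypotheses $b_0=0$ (class $\Ha_D$) or $a_0=0$ (class $\Ha_N$) then allow one to pass from $V$ back to $U$; for general $h$ one instead integrates $\LL^1[U]$ against a weight (the functional $\K$) and shows nonnegativity under $\log R\le 3/2$ via the factorization $h=h^1 g$ and a further mode analysis.

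Your area radius $\rho=\sqrt{a/\pi}$ with $a=\pi\sum_{n}n\abs{h_n}^2$ also omits the $n=0$ mode --- automatically, since the shoelace formula has a factor of $n$. But this means your diagnosis of the obstruction is wrong: the conditions $a_0+\overline{b_0}=0$ or $\alpha=0$ \emph{do not enter} the inequality $\LL\rho\ge 0$ at all, because neither the constant nor the logarithm appears in $a(s)$. So the extra boundary hypotheses cannot ``kill the offending mode'' in your setup --- there is no such mode. Consequently either $\LL\rho\ge 0$ holds for every $h\in\Ha(\A,\A^\ast)$, in which case your argument proves the full conjecture (open at the time of this paper), or it fails for some $h$ and the partial hypotheses give you no leverage. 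The real difficulty with your $\rho$ is that $2a\ddot a-(\dot a)^2\ge 4a^2$ is \emph{nonlinear} in the modes: it does not split as a sum $\sum_n(\cdots)\ge 0$, so one cannot imitate the paper's clean proof of $\LL^\lambda[V]\ge 0$. You would need a genuinely new argument here, and none is indicated.

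In short: the comparison endgame is fine, but the crucial differential inequality is unproved, and your stated mechanism for how the hypotheses would secure it does not apply to the quantity you chose. The paper's choice of $V$ (rather than the area radius) is not incidental --- it is exactly what makes the pointwise inequality linear and hence provable mode by mode.
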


Explicit lower  bounds of $\frac{R_\ast}{r_\ast}$ have been obtained by  Lyzzaik  \cite{Ly} (expressed in terms of the modulus of the Gr\"otzsch ring domain),   by  Weitsman   \cite{We}:
\[\frac{R_\ast}{r_\ast} \ge 1 + \frac{1}{2} \frac{r^2}{R^2} \log^2 \frac{R}{r}\]
and   by  Kalaj  \cite{Ka}:
\[\frac{R_\ast}{r_\ast} \ge 1 + \frac{1}{2}  \log^2 \frac{R}{r}.\]

\subsection{The Nitsche bound}
However, none of these lower bounds reach the critical number $\frac{1}{2} \left(\frac{R}{r}+ \frac{r}{R}\right)$.  In the present paper we prove~\eqref{nitschebound} when the domain annulus is conformally not too thick.
\begin{theorem}\label{thmain}
An annulus $\A= A(r,R)$ whose modulus $\Mod \A =\log \frac{R}{r} \le 3/2$   can be mapped harmonically onto the annulus $\A^\ast = A(r_\ast, R_\ast)$ if and only if
\[\frac{R_\ast}{r_\ast} \ge \frac{1}{2} \left(\frac{R}{r} + \frac{r}{R}\right). \]
\end{theorem}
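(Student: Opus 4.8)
The plan is to reduce the nontrivial (``only if'') direction to a one‑dimensional comparison estimate for the circular mean of $\abs{h}^2$; the ``if'' direction is the elementary one, realized by explicit mappings — the Nitsche map $h_0(z)=\alpha\bigl(z+r^2/\bar z\,\bigr)$ is a harmonic homeomorphism of $A(r,R)$ onto $A\bigl(2\alpha r,\,\alpha(R+r^2/R)\bigr)$, which gives equality, and suitably forced harmonic mappings realize every larger ratio. After rescaling the target and a rotation we may assume $r_\ast=1$ and write $\Mod\A=\log(R/r)=:L\le 3/2$, so the goal becomes $R_\ast\ge\cosh L=\tfrac12(\tfrac Rr+\tfrac rR)$. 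A preliminary observation: since $h$ is a proper degree‑one map of the \emph{open} annuli preserving the order of the boundaries, $\abs{h(z)}\to r_\ast$ uniformly as $\abs z\to r$ and $\abs{h(z)}\to R_\ast$ uniformly as $\abs z\to R$, while $r_\ast\le\abs h\le R_\ast$ on $\A$.

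I would expand $h$ on circles; harmonicity forces the Fourier--Laurent form
\[ h(\rho e^{i\theta})=\sum_{n\in\Z}c_n(\rho)\,e^{in\theta},\qquad c_n(\rho)=A_n\rho^{\,n}+B_n\rho^{-n}\ (n\ne0),\qquad c_0(\rho)=A_0+B_0\log\rho . \]
Put $t=\log\rho$, so $\ddot c_n=n^2c_n$ for $n\ne0$ and $\ddot c_0=0$, and consider the circular mean
\[ E(\rho)=\frac1{2\pi}\int_0^{2\pi}\abs{h(\rho e^{i\theta})}^2\,d\theta=\sum_{n\in\Z}\abs{c_n(\rho)}^2 . \]
By the preliminary observation $E\to r_\ast^2$ as $t\to\log r$, $E\to R_\ast^2$ as $t\to\log R$, and $E\ge r_\ast^2$ throughout; together with convexity of $E$ in $t$ this yields $\dot E\ge0$ on $(\log r,\log R)$. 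The ``differential operator for the circular mean'' comes from differentiating term by term: using $\ddot c_n=n^2c_n$ one gets $\ddot E=2\sum_n\abs{\dot c_n}^2+2\sum_{n\ne0}n^2\abs{c_n}^2$, while Cauchy--Schwarz gives $\dot E^{\,2}\le 4E\sum_n\abs{\dot c_n}^2$; hence
\[ 2E\,\ddot E-\dot E^{\,2}\ \ge\ 4E\sum_{n\ne0}n^2\abs{c_n}^2\ \ge\ 4E\bigl(E-\abs{c_0}^2\bigr), \]
equivalently $\ddot u\ge u-\abs{c_0}^2/u$ on $(\log r,\log R)$ for $u:=\sqrt E$. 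If the drift mode were absent, $c_0\equiv0$, the Sturm comparison with $v(t)=r_\ast\cosh(t-\log r)$ (using $u(\log r)=r_\ast$, $\dot u\ge0$, $u>0$: then $(u'v-uv')'=u''v-uv''\ge0$, so $u/v$ is nondecreasing) would give $R_\ast=u(\log R)\ge r_\ast\cosh L$ — exactly the Nitsche bound. Indeed $h_0$ is the ``free evolution'': $c_0\equiv0$, only the mode $n=1$ is present, and $\ddot u=u$ holds with equality; for any other (``forced'') harmonic homeomorphism the higher modes make $\ddot u\ge u$ hold with room to spare, which is the sense in which forcing increases $R_\ast/r_\ast$. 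The same mechanism dispatches the auxiliary cases: ``vanishing average of $h$ on $C_r$'' means $c_0(\log r)=0$ and ``vanishing average of the normal derivative'' means $B_0=0$.

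The crux — and the step I expect to be the main obstacle — is controlling the drift term $\abs{c_0(\rho)}^2/u$ when $c_0\not\equiv0$. Here $c_0(\rho)$ is the centroid of the Jordan curve $h(C_\rho)$, which winds once about $0$ inside $\A^\ast$; one must use the homeomorphism property and the winding number $1$ to show that this centroid cannot be large compared with the ``oscillation'' $\bigl(E-\abs{c_0}^2\bigr)^{1/2}=\bigl(\tfrac1{2\pi}\int\abs{h-c_0}^2\,d\theta\bigr)^{1/2}$ of $h$ on the circle, and that a nonzero drift can only push $R_\ast/r_\ast$ up. Since $c_0(t)=A_0+B_0t$ is affine in $t$ and $\abs{c_0}\le r_\ast$ at $t=\log r$ (from $E(\log r)=r_\ast^2$), the \emph{short} length of the interval, $\Mod\A=L\le 3/2$, limits how far $c_0$ can drift; feeding this into $\ddot u\ge u-\abs{c_0}^2/u$ and carrying the error term $\int\abs{c_0}^2/u$ through a refined comparison should still yield $u(\log R)\ge r_\ast\cosh L$, hence $R_\ast\ge r_\ast\cosh L=\tfrac12(\tfrac Rr+\tfrac rR)$. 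I expect the numerical threshold $3/2$ to drop out precisely from optimizing this comparison, and the genuinely wide annuli to be exactly the configurations in which the drift cannot be absorbed this way — which is why they are left under the extra hypotheses, and why sharpness (via $h_0$) only pins down the constant where the argument is tight.
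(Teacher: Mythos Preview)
Your derivation of the differential inequality $\ddot u\ge u-\abs{c_0}^2/u$ in $t=\log\rho$ is correct, and the Sturm comparison with $v=r_\ast\cosh(t-\log r)$ cleanly gives the Nitsche bound when $c_0\equiv 0$. But the step you yourself flag as ``the crux'' is a genuine gap, not a routine completion. The affine drift $c_0(t)=A_0+B_0t$ carries no a~priori bound on its slope $B_0=\dashint_{C_\rho}h_\rho$: the only information from the inner boundary is $\abs{c_0(\log r)}\le r_\ast$. Thus the claim that ``the short length $L\le 3/2$ limits how far $c_0$ can drift'' has no quantitative content, and nothing in your argument lets you absorb $\int\abs{c_0}^2/u\,dt$ into a perturbed $\cosh$ comparison. (Note also that the auxiliary hypotheses do \emph{not} give $c_0\equiv 0$: $\Ha_D$ only gives $c_0(\log r)=0$ with $c_0(t)=B_0(t-\log r)$ still growing, and $\Ha_N$ gives $c_0\equiv A_0$ constant but possibly nonzero; so even those cases are not dispatched by your clean comparison.)

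The paper's treatment of the drift is structurally different from an ODE comparison. It works with $U=u^2$ and the operator $\LL[U]=\frac{\rho^2+1}{\rho^3}\frac{d}{d\rho}\bigl[\rho^3\frac{d}{d\rho}\bigl(\frac{U}{\rho^2+1}\bigr)\bigr]$, engineered so that $\LL$ annihilates $U$ for the critical Nitsche map. Integrating $\LL[U]$ against the weight $\rho(R^2-\rho^2)/(\rho^2+1)$ yields, after integration by parts, an \emph{identity} $\K[U]=\frac{2R^2}{R^2+1}U(R)-\frac{R^2+1}{2}U(1)-\frac{R^2-1}{2}\dot U(1)$, so the Nitsche bound reduces to $\K[U]\ge 0$. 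This is established in two regimes. For $1<R\le e$ one substitutes $h=h^1\cdot g$ and, after a further integration by parts using the winding number (Lemma~\ref{windh}), obtains $\K[U]$ as a double integral $\iint_\A\bigl(P(\rho)\abs{g_z}^2+Q(\rho)\abs{g_{\bar z}}^2\bigr)$ with explicit weights; the coefficient $Q(\rho)=(R^2-\rho^2)-(R^2-1)\log(R/\rho)$ is nonnegative on $[1,R]$ precisely when $R\le e$. For $e<R\le e^{3/2}$ one instead proves the strengthened variance bound $\K[V]\ge (R^2-1)\abs{b_0}^2$ by a mode-by-mode positive-definiteness check, and then verifies that the resulting $2\times 2$ quadratic form in $(a_0,b_0)$ is positive definite; the threshold $3/2$ is exactly where the discriminant polynomial $\phi(R)=4R^2(R^2-3)\log^2R+8R^2(R^2-1)\log R-(R^2-1)(R^4-1)$ ceases to be positive. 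In short, $3/2$ does not ``drop out of optimizing the comparison'' but from two separate, rather delicate algebraic positivity computations; your outline supplies neither.
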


The case when $\Mod (\A)>3/2$ remains open. Nevertheless  we  show that, regardless of the modulus of $\A$, the  Nitsche bound holds  for a class of harmonic homeomorphisms  $h \colon {\mathbb A} \overset{\textnormal{\tiny{onto}}}{\longrightarrow} {\mathbb A}^\ast$ having vanishing average on the inner circle of $\A$; that is,
\[\lim_{\rho \searrow r}\dashint_{C_{\rho}} h   =0.\]
Let us denote by $\mathcal H_D (\A , \A^\ast) \subset  \mathcal H (\A , \A^\ast)$ the subclass of  such  mappings.
Another class, denoted by $\mathcal H_N (\A , \A^\ast) \subset  \mathcal H (\A , \A^\ast)$, consists of harmonic homeomorphisms with vanishing normal derivative in the average; that is,
\[\dashint_{C_{\rho}}\frac{\D h}{\D n}=0 ,  \quad \mbox{for some (equivalently for all) } r< \rho < R.\]

\begin{theorem}\label{thzero}
Suppose $h \colon \A \to \A^\ast$ belongs to $\Ha_D (\A , \A^\ast)$ or $\Ha_N (\A , \A^\ast)$.
Then
\[\frac{R_\ast}{r_\ast} \ge \frac{1}{2} \left(\frac{R}{r} + \frac{r}{R}\right). \]
\end{theorem}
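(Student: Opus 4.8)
The plan is to reduce everything to one sharp ordinary differential inequality for the radial mean
\[
\mu(\rho):=\dashint_{C_\rho}\abs{h}^2,\qquad r<\rho<R ,
\]
and to compare it with the profile of the critical Nitsche map. Since $h$ is orientation preserving and respects the order of the boundary circles, it carries $C_r$ onto $C_{r_\ast}$ and $C_R$ onto $C_{R_\ast}$; hence $\abs{h}\equiv r_\ast$ on $C_r$, $\abs{h}\equiv R_\ast$ on $C_R$, and in particular $\mu(r)=r_\ast^2$, $\mu(R)=R_\ast^2$. Expanding $h=\sum_{n\in\Z}c_n(\rho)\,e^{in\theta}$, harmonicity of the individual modes forces $c_n(\rho)=a_n\rho^{n}+b_n\rho^{-n}$ for $n\neq0$ and $c_0(\rho)=a_0+b_0\log\rho$. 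With $t=\log\rho$ and $\dot{\ }=d/dt$, two differentiations together with $\rho^2c_n''+\rho c_n'=n^2c_n$ give
\[
\ddot\mu=2\Bigl(\dashint_{C_\rho}\abs{h_\theta}^2+\rho^2\dashint_{C_\rho}\abs{h_\rho}^2\Bigr)
=4\sum_{n\neq0}n^2\bigl(\abs{a_n}^2\rho^{2n}+\abs{b_n}^2\rho^{-2n}\bigr)+2\abs{b_0}^2 ,
\]
whereas $\mu=\sum_{n\neq0}\bigl(\abs{a_n}^2\rho^{2n}+\abs{b_n}^2\rho^{-2n}\bigr)+2\sum_{n\neq0}\re(a_n\bar b_n)+\abs{c_0(\rho)}^2$.

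I claim the theorem follows from
\[
\ddot\mu\ \ge\ 4\mu-2r_\ast^2\qquad\text{on }(r,R).\tag{$\ast$}
\]
This is sharp: the critical Nitsche map $h_\ast(z)=\tfrac{r_\ast}{2r}\bigl(z+r^2/\bar z\bigr)$ has $\abs{h_\ast}=\tfrac{r_\ast}{2r}\bigl(\rho+r^2/\rho\bigr)$, so $\mu_\ast(\rho)=r_\ast^2\cosh^2(t-\log r)$ satisfies $\ddot\mu_\ast=4\mu_\ast-2r_\ast^2$ with $\mu_\ast(r)=r_\ast^2$ and $\dot\mu_\ast(\log r)=0$. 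Granting $(\ast)$, set $w=\mu-\mu_\ast$; then $\ddot w\ge4w$ on $(r,R)$, $w(\log r)=0$, and $\dot w(\log r^{+})\ge0$ since $\mu\ge r_\ast^2=\mu(r)$ makes $\mu$ nondecreasing in $t$. Writing $\tau=t-\log r$, the function $e^{-2\tau}(\dot w+2w)$ has nonnegative derivative and nonnegative value at $\tau=0$, hence is $\ge0$; then $e^{2\tau}w$ has nonnegative derivative and vanishes at $\tau=0$, so $w\ge0$ throughout. Therefore $R_\ast^2=\mu(R)\ge r_\ast^2\cosh^2\!\bigl(\log\tfrac Rr\bigr)$, i.e. $\tfrac{R_\ast}{r_\ast}\ge\tfrac12\bigl(\tfrac Rr+\tfrac rR\bigr)$.

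It remains to prove $(\ast)$, which by the two displayed formulas is equivalent to
\[
2\sum_{n\neq0}(n^2-1)\bigl(\abs{a_n}^2\rho^{2n}+\abs{b_n}^2\rho^{-2n}\bigr)\ \ge\ 4\sum_{n\neq0}\re(a_n\bar b_n)-\abs{b_0}^2+2\abs{c_0(\rho)}^2-r_\ast^2 .\tag{$\ast\ast$}
\]
The left-hand side is nonnegative, but the first modes $n=\pm1$ contribute nothing to it, so $(\ast\ast)$ is not automatic; indeed it is false for general $h\in\Ha(\A,\A^\ast)$, which is precisely why one restricts to the two special classes. The right-hand side is controlled by the boundary behaviour: evaluating $\mu$ on $C_r$ gives $4\sum_{n\neq0}\re(a_n\bar b_n)=2r_\ast^2-2\abs{c_0(r)}^2-2\sum_{n\neq0}\bigl(\abs{a_nr^{n}}^2+\abs{b_nr^{-n}}^2\bigr)$, while the rigidity of the circle map $h|_{C_r}\colon C_r\to C_{r_\ast}$ — the vanishing of every nonconstant Fourier coefficient of the constant function $\abs{h}^2=r_\ast^2$, i.e. $\sum_{n}c_{n+k}(r)\,\overline{c_n(r)}=0$ for $k\neq0$ — is what controls the residual cross-terms $\re(a_{\pm1}\bar b_{\pm1})$. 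Imposing $h\in\Ha_N(\A,\A^\ast)$ (so $b_0=0$ and $c_0\equiv a_0=c_0(r)$) or $h\in\Ha_D(\A,\A^\ast)$ (so $c_0(\rho)=b_0\log(\rho/r)$ and $c_0(r)=0$) then collapses the right-hand side of $(\ast\ast)$ to something dominated by the left-hand side. (In the language of the paper, the relevant invariant is the Hopf differential $\phi=h_z\overline{h_{\bar z}}$, holomorphic on $\A$: its $z^{-2}$-coefficient equals the constant $\tfrac14\dashint_{C_\rho}\bigl(\rho^2\abs{h_\rho}^2-\abs{h_\theta}^2\bigr)$, and the averaging hypotheses are exactly what force $\re\phi_{-2}\ge\tfrac12\abs{c_0(\rho)}^2-\tfrac14 r_\ast^2$.)

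The one genuine difficulty is this last step: turning the normalization of $c_0$ together with the rigidity of $h|_{C_r}$ into $(\ast\ast)$, i.e. ruling out the possibility that the $n=\pm1$ cross-terms overwhelm its right-hand side. Everything else — the ODE identity for $\mu$ and the Sturm comparison against $\mu_\ast$ — is routine, and no bound on $\Mod\A$ is needed, since the only place where $\log(R/r)$ can enter $(\ast\ast)$ is through $\abs{c_0(\rho)}^2$, which is tamed the moment either averaging hypothesis is used.
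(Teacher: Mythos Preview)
Your comparison argument is fine, but the inequality $(\ast)$ on which everything rests is not proved---and in fact it is \emph{false} for $h\in\Ha_D(\A,\As)$.  Take the paper's own example (Proposition~\ref{varprop1}, equation~\eqref{var8}): with $r=r_\ast=1$,
\[
h(z)=\tfrac12\Bigl(z+\tfrac1{\bar z}\Bigr)+b_0\log\abs{z},\qquad b_0\ne 0\ \text{small},
\]
lies in $\Ha_D(\A,\As)$ for every $R>1$.  Here $\mu(\rho)=\cosh^2 t+\abs{b_0}^2 t^2$ with $t=\log\rho$, so
\[
\ddot\mu-(4\mu-2)=2\abs{b_0}^2(1-2t^2),
\]
which is \emph{negative} once $t>1/\sqrt2$.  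Your sketch of $(\ast\ast)$ only invokes data on $C_r$, so if it worked it would prove $(\ast)$ for this $h$; hence the sketch cannot be made rigorous as stated.  The appeal to ``rigidity of $h|_{C_r}$'' and the Hopf differential does not supply the missing bound on the $n=\pm1$ cross-terms against the growing $2\abs{c_0(\rho)}^2$ term.

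The remedy is exactly the move the paper makes: subtract the zero mode first and run your argument on the \emph{variance} $V=\mu-\abs{c_0(\rho)}^2$ rather than on $\mu$.  A direct Fourier computation gives, with no hypothesis beyond harmonicity,
\[
\ddot V-4V+2V(1)=4\sum_{n\ne 0}(n^2-1)\bigl(\abs{a_n}^2\rho^{2n}+\abs{b_n}^2\rho^{-2n}\bigr)+2\sum_{n\ne 0}\abs{a_n-b_n}^2\ \ge\ 0,
\]
which is the constant-coefficient analogue of the paper's $\LL^1[V]\ge 0$ (Proposition~\ref{varprop1}).  For $h\in\Ha_D$ one has $c_0(r)=0$, hence $V(1)=\mu(1)=r_\ast^2$ and $\dot V(1)=\dot\mu(1)\ge 0$, and your Sturm comparison now yields $V(\rho)\ge r_\ast^2\cosh^2 t$; since $\mu\ge V$ this gives the Nitsche bound.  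For $h\in\Ha_N$, however, $V(1)=r_\ast^2-\abs{c_0}^2<r_\ast^2$ in general, and the same comparison only produces $\mu(\rho)\ge (r_\ast^2-\abs{c_0}^2)\cosh^2 t+\abs{c_0}^2$, which is \emph{too weak}.  The paper closes this gap with a separate integral estimate (Lemma~\ref{LVest} and \S9); your outline contains nothing corresponding to that step.
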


It turns out that the round shape of the outer boundary of the target annulus is not essential. From now on, to simplify matters, we normalize  $\A$ so that its inner boundary is the unit circle; that is, $\A=A(1,R)$. Moreover, the target $\As:= h(\A)$ will be   a half round annulus. Precisely $\As$  will be   a  doubly connected domain whose inner boundary is the unit circle.  The outer boundary of $\As$ can be arbitrary. The mean outer radius of $\mathcal A $ via the mapping $h \in\Ha   (\A, \mathcal A )$ is   defined by
\begin{equation}
R_\ast (h) = \lim_{\rho\nearrow R}\left(\dashint_{C_\rho} |h|^2\right)^\frac{1}{2}.
\end{equation}

Now Theorems \ref{thmain} and \ref{thzero} are special cases of the following results.
\begin{theorem}\label{thgene1}
Let $h \in\Ha  (\A, \As)$ with $\Mod \A  \le 3/2$. Then
\begin{equation}
R_\ast (h) \ge \frac{1}{2} \left(R + \frac{1}{R}\right).
\end{equation}
\end{theorem}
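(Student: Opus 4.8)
The plan is to pass from the two-dimensional harmonic map to a one-dimensional comparison problem for the circular means $\Phi(\rho):=\dashint_{C_\rho}|h|^2$, whose model solution is the circular mean $\tfrac14(\rho+\rho^{-1})^2$ of the critical Nitsche mapping. After the normalization of the excerpt we have $\A=A(1,R)$ with $\Mod\A=\log R\le 3/2$, and $h$ carries the inner circle onto the unit circle. Write $z=\rho e^{i\theta}$ and expand $h(\rho e^{i\theta})=\sum_{n\in\Z}c_n(\rho)\,e^{in\theta}$. Harmonicity forces the Euler equation $\rho^2c_n''+\rho c_n'-n^2c_n=0$, so $c_n(\rho)=a_n\rho^{\,n}+b_n\rho^{-n}$ for $n\ne 0$ and $c_0(\rho)=a_0+b_0\log\rho$; thus $\Phi(\rho)=\sum_n|c_n(\rho)|^2$ and $R_\ast(h)^2=\lim_{\rho\nearrow R}\Phi(\rho)$. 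I would single out the first harmonic $c_1$, which carries the degree of the map, and the zeroth harmonic $c_0$, the free part.

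First I would record the data on the inner circle. Since $h$ carries $C_1$ homeomorphically onto the unit circle, $|h|\equiv 1$ there, so $\sum_n|c_n(1)|^2=1$; and since $h|_{C_1}$ is an orientation- and order-preserving circle homeomorphism of degree one, the region bounded by $h(C_\rho)$ has area $\pi\sum_n n\,|c_n(\rho)|^2$, which at $\rho=1$ is the area $\pi$ of the unit disk, giving $\sum_n n\,|c_n(1)|^2=1$ as well. Subtracting, $\sum_{n\ge 2}(n-1)|c_n(1)|^2=\sum_{n\le 0}(1-n)|c_n(1)|^2$, so the defect $1-|c_1(1)|^2$ is carried jointly by the higher and the nonpositive harmonics. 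The orientation hypothesis is $J_h=\tfrac1\rho\,\im\big(\overline{h_\rho}\,h_\theta\big)>0$ on $\A$; integrating it over circles shows that $a(\rho):=\sum_n n\,|c_n(\rho)|^2$ is strictly increasing, since $a'(\rho)=2\rho\dashint_{C_\rho}J_h>0$, and the same pointwise positivity feeds mode-by-mode constraints on the $a_n,b_n$ (in the pure first-mode case it forces $|a_1|\ge|b_1|$ and hence $|c_1(\rho)|\ge|c_1(1)|\cdot\tfrac12(\rho+\rho^{-1})$, the Nitsche profile).

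The heart of the argument is a differential inequality for $\Phi$. From $\Delta|h|^2=4(|h_z|^2+|h_{\bar z}|^2)\ge 4J_h$ and the circular-mean formula for the Laplacian,
\[\tfrac1\rho\big(\rho\,\Phi'(\rho)\big)'=4\dashint_{C_\rho}\big(|h_z|^2+|h_{\bar z}|^2\big)=2\sum_n\Big(|c_n'(\rho)|^2+\tfrac{n^2}{\rho^2}|c_n(\rho)|^2\Big).\]
Term by term this dominates $\tfrac{4}{\rho^2}|c_n|^2$ with room to spare for $|n|\ge 2$, is exactly balanced (using $\sum_n n\,|c_n(1)|^2=1$) in the modes $n=\pm1$, and has a deficit only in the zeroth mode. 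The target is to show $\Phi$ is a supersolution of the Euler equation $\rho^2y''+\rho y'-4y+2=0$, whose solution with $y(1)=1$, $y'(1)=0$ is precisely $y(\rho)=\tfrac14(\rho+\rho^{-1})^2$. Granting this, since $\Phi(1)=1$ and $\Phi'(1)\ge 0$ (the minimum of the subharmonic function $|h|^2$ on $\overline{\A}$ is attained on $C_1$, so the Hopf lemma gives $\partial_\rho\Phi(1)\ge 0$), a comparison closes the estimate: rewriting in $t=\log\rho$ the inequality becomes $\ddot\Phi\ge 4\Phi-2$, so $w:=\Phi-\tfrac14(\rho+\rho^{-1})^2$ obeys $\ddot w\ge 4w$ with $w=\dot w\big|_{t=0}\ge 0$, hence $w$ stays convex and nondecreasing as long as $w\ge 0$, forcing $w\ge 0$ throughout and $R_\ast(h)^2=\lim_{\rho\nearrow R}\Phi(\rho)\ge\tfrac14(R+R^{-1})^2$, which is the claim.

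I expect the supersolution inequality to be the real obstacle, and specifically its zeroth-mode contribution. The free part $c_0(\rho)=a_0+b_0\log\rho$ — which the Nitsche mapping kills entirely — contributes $2|b_0|^2/\rho^2$ to the left side of the identity above but $\tfrac{4}{\rho^2}|a_0+b_0\log\rho|^2$ to the right, a genuine deficit that must be absorbed into the slack coming from the inner-circle relations and the higher modes; this absorption succeeds only when $\log R$ is small enough, and optimizing it produces the threshold $3/2$. This is exactly why the general conjecture is still open, and why in the subclasses $\Ha_D(\A,\As)$ (where $a_0=\lim_{\rho\searrow 1}\dashint_{C_\rho}h=0$) and $\Ha_N(\A,\As)$ (where $b_0=0$) the deficit disappears and Theorem~\ref{thzero} holds for every modulus. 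Balancing the free mode against the defect $1-|c_1(1)|^2$, under the area monotonicity of $a(\rho)$, is where the work concentrates.
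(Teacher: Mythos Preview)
Your outline correctly singles out the quadratic mean $\Phi=U$, the Fourier decomposition, the area identity $\sum_n n\abs{c_n(1)}^2=1$, and the special role of the zeroth mode. But the central step --- the pointwise differential inequality $\ddot\Phi\ge 4\Phi-2$ (in $t=\log\rho$) --- is simply false for $h\in\Ha(\A,\As)$, even under the restriction $\log R\le 3/2$. Take $h(z)=\alpha z+a_0\log\abs{z}$ with $\abs{\alpha}=1$ and real $a_0\in(0,1)$; one checks easily that $J(z,h)>0$ on $\overline{\A}$, so $h\in\Ha(\A,\As)$. Here $\Phi(\rho)=\rho^2+a_0^2\,t^2$, and
\[
\ddot\Phi-4\Phi+2 \;=\; 2+2a_0^2\bigl(1-2t^2\bigr),
\]
which becomes negative once $t>1/\sqrt{2}$ and $a_0^2>1/(2t^2-1)$; for instance $a_0=0.6$ and $\rho$ near $e^{3/2}$. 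The theorem still holds for this $h$ (indeed $\Phi\ge\rho^2$), but your comparison argument collapses. Your closing paragraph concedes that the zeroth-mode deficit is ``the real obstacle'' and must be ``absorbed''; the point is that this absorption cannot be done pointwise, so no amount of optimizing the same inequality will yield the result.

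The paper's proof departs from yours in two essential ways. First, it replaces your operator by
\[
\LL^{1}[U]=\frac{\rho^2+1}{\rho^3}\,\frac{d}{d\rho}\Bigl[\rho^3\frac{d}{d\rho}\Bigl(\frac{U}{\rho^2+1}\Bigr)\Bigr],
\]
which is tailored so that $\LL^{1}[V]\ge 0$ holds \emph{pointwise} for the variance $V=U-\bigl|\dashint h\bigr|^2$ (this is where your Wirtinger-type step $\abs{h_\theta}^2\ge\abs{h}^2$ actually lands). Second, since $\LL^{1}[U]$ itself can be negative, the paper never asserts a pointwise bound on $U$; instead it integrates $\LL^{1}[U]$ against the weight $\rho(R^2-\rho^2)/(\rho^2+1)$, obtaining a quantity $\K[U]$ that depends only on $U(1)$, $\dot U(1)$, $U(R)$, and then proves $\K[U]\ge 0$. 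Even this integrated inequality requires two genuinely different arguments: for $1<R\le e$ one writes $h=h^{1}\cdot g$ and controls the integrand sign directly (the factor $(R^2-\rho^2)-(R^2-1)\log(R/\rho)$ is nonnegative precisely when $R\le e$); for $e<R\le e^{3/2}$ one instead proves a sharpened estimate $\K[V]\ge(R^2-1)\abs{b_0}^2$ via a mode-by-mode quadratic-form analysis and shows that the resulting form in $(a_0,b_0)$ is positive definite --- the threshold $3/2$ is exactly where that determinant changes sign. None of this is visible from the pointwise ODE picture.
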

No restriction on the size of the annulus $\A$ will be imposed if  the mapping has vanishing average on the inner circle, or vanishing average of the normal derivative.

\begin{theorem}\label{thgene2}
Suppose $h$ belongs to one of the following
\begin{enumerate}[(i)]
\item\label{thgene21} $\Ha_D (\A , \As)$
\item\label{thgene22} $\Ha_N (\A , \As)$.
\end{enumerate}
Then
\begin{equation}
R_\ast (h) \ge \frac{1}{2} \left(R + \frac{1}{R}\right).
\end{equation}
\end{theorem}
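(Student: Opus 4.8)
The plan is to reduce the lower bound for $R_\ast(h)$ to a one‑dimensional Sturm‑type comparison for the radial energy profile of $h$. Normalizing $\A=A(1,R)$ and passing to the logarithmic variable $t=\log\abs z\in[0,T]$, $T=\Mod\A=\log R$, in which $\Delta$ becomes $\partial_t^2+\partial_\theta^2$, I set
\[ \Phi(t)=\dashint_{C_{e^t}}\abs h^2 . \]
Since $h$ preserves the order of the boundary circles it carries $C_1$ onto the unit circle, so $\Phi(0)=1$ (a one‑sided limit); since $\As\subset\{\abs w>1\}$ we have $\abs h>1$ on $\A$, so $\Phi$ (which is convex, see below) attains its minimum on $[0,T]$ at $t=0$ and $\Phi'(0^+)\ge0$. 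As $R_\ast(h)^2=\lim_{t\to T^-}\Phi(t)$, the goal is
\[ \Phi(T)\ \ge\ \cosh^2 T\ =\ \tfrac14\bigl(R+\tfrac1R\bigr)^2 , \]
with equality realized by the critical Nitsche map $h(z)=\tfrac12(z+\bar z^{-1})$, which lies in both $\Ha_D$ and $\Ha_N$ and for which $\Phi(t)\equiv\cosh^2 t$.

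Expand $h(e^{t}e^{i\theta})=\sum_{n\in\Z}h_n(t)e^{in\theta}$, where $h_n(t)=a_ne^{nt}+b_ne^{-nt}$ for $n\ne0$ and $h_0(t)=a_0+b_0t$; by Parseval $\Phi=\sum_n\abs{h_n}^2$, which is convex. For $n\ne0$ the relations $h_n''=n^2h_n$ and $\im(\overline{h_n}h_n')=2n\im(a_n\overline{b_n})=\const$ give, with $\phi_n:=\abs{h_n}^2$,
\[ 2\phi_n\phi_n''-(\phi_n')^2\ =\ 4\bigl(\im(\overline{h_n}h_n')\bigr)^2+4n^2\phi_n^2\ \ge\ 4\phi_n^2 , \]
i.e. $(\sqrt{\phi_n})''\ge\sqrt{\phi_n}$, with equality only when $\abs n=1$ and $\im(a_n\overline{b_n})=0$. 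The decisive structural fact is that $\{u\ge0:\ u''\ge u\}$ is closed under $(u,v)\mapsto\sqrt{u^2+v^2}$: for $w=\sqrt{u^2+v^2}$ one has $ww''=uu''+vv''+(u')^2+(v')^2-(w')^2$ and $(w')^2=(uu'+vv')^2/(u^2+v^2)\le(u')^2+(v')^2$ by Cauchy--Schwarz, so $ww''\ge u^2+v^2=w^2$. Applied to the partial sums and passed to the limit, this yields $(\sqrt{\Phi_1})''\ge\sqrt{\Phi_1}$ on $[0,T]$ for $\Phi_1:=\sum_{n\ne0}\phi_n$, which is strictly positive there since $\Phi_1(t)=0$ would force $h$ to be constant on $C_{e^t}$. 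Finally, if $\Psi>0$ satisfies $\Psi''\ge\Psi$, then $p:=\Psi'\cosh t-\Psi\sinh t$ has $p'=(\Psi''-\Psi)\cosh t\ge0$, so $p(t)\ge p(0)=\Psi'(0)$; weakening this to $p(t)\ge0$ when $\Psi'(0)\ge0$ gives $(\log\Psi)'\ge\tanh t$ and, integrating from $0$, $\Psi(t)\ge\Psi(0)\cosh t$, while keeping $p(t)\ge\Psi'(0)$ gives the sharper $\Psi(t)\ge\Psi(0)\cosh t+\Psi'(0)\sinh t$.

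It remains to dispose of the zeroth mode, and this is where the side hypotheses act. Under $\Ha_D$ one has $h_0(0)=0$, hence $h_0(t)=b_0t$ and $\phi_0(0)=0$, so $\Phi_1(0)=\Phi(0)=1$ and $\Phi_1'(0)=\Phi'(0)-\tfrac{d}{dt}\abs{b_0t}^2\big|_{t=0}=\Phi'(0)\ge0$; the comparison above gives $\sqrt{\Phi_1(t)}\ge\cosh t$, whence $R_\ast(h)^2\ge\Phi_1(T)\ge\cosh^2T$. Under $\Ha_N$ the flux of $h$ across circles vanishes, so there is no logarithmic term, $h_0\equiv a_0$, and $\Phi=\abs{a_0}^2+\Phi_1$ with $\Phi_1(0)=1-\abs{a_0}^2$; the same comparison yields only $R_\ast(h)^2\ge\abs{a_0}^2+(1-\abs{a_0}^2)\cosh^2T=\cosh^2T-\abs{a_0}^2\sinh^2T$, falling short by $\abs{a_0}^2\sinh^2T$. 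Plugging the refined estimate into $\Phi(T)=\abs{a_0}^2+\Phi_1(T)$, one checks that the claim follows as soon as $\Psi_1'(0)\ge\abs{a_0}$, i.e. as soon as
\[ \Phi'(0)\ \ge\ 2\abs{a_0}\sqrt{1-\abs{a_0}^2}. \]
To reach this one must invoke injectivity beyond the inclusion $\As\subset\{\abs w>1\}$: each $h|_{C_{e^t}}$ is a positively oriented Jordan curve about $0$, so $L(t):=\sum_n n\abs{h_n(t)}^2$ satisfies $L(0^+)=1$ (winding number) and $L'(t)>0$ (Lewy's theorem: $J_h>0$), hence $L>1$ on $(0,T]$; together with the monotonicity of the boundary homeomorphism $h|_{C_1}$ these constraints should bound $\abs{a_0}$ and produce the required lower bound for $\Phi'(0)$.

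The whole difficulty is concentrated in the zeroth mode: once $h_0$ is degenerate — a pure logarithm under $\Ha_D$, a constant under $\Ha_N$ — the convexity and the Sturm comparison suffice, whereas for a genuine affine $h_0=a_0+b_0t$ they do not, and this is exactly why the unconditional Theorem~\ref{thgene1} must be confined to $\Mod\A\le3/2$, where the argument can be salvaged only up to that explicit transcendental threshold. The delicate step above is the $\Ha_N$ case — deriving the inequality $\Phi'(0)\ge2\abs{a_0}\sqrt{1-\abs{a_0}^2}$ from global injectivity rather than from the behaviour of $\abs h$ on the boundary alone. One must also first secure enough boundary regularity of $h$ on $\bar\A$ — continuity up to the inner circle, finiteness of $\sum_n\abs n\,\abs{h_n}^2$ near $t=0$, interior smoothness — for the circular means $\Phi$, the limits defining $\Ha_D$ and $\Ha_N$, and the winding‑number identity on $C_1$ all to be legitimate.
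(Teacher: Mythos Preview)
Your argument for $\Ha_D$ is correct and is a pleasant repackaging of what the paper does. In the paper's language your inequality $(\sqrt{\Phi_1})''\ge\sqrt{\Phi_1}$ is the nonlinear counterpart of the linear differential inequality $\LL^{1}[V]\ge 0$ for the variance $V=\Phi_1$; both rest on the same Wirtinger-type estimate $\dashint_{C_\rho}\abs{H_\theta}^2\ge\dashint_{C_\rho}\abs{H}^2$ for the zero-mean part $H$ of $h$ (cf.\ \eqref{var3}). Your Sturm comparison with $\cosh t$ is then precisely the integration step \eqref{u0}--\eqref{u2} carried out in logarithmic coordinates. The log-variable packaging and the closure of $\{u\ge0:u''\ge u\}$ under $\sqrt{u^2+v^2}$ are genuinely nice and make the $\Ha_D$ case self-contained without introducing the operator family $\LL^\lambda$.

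The $\Ha_N$ case, however, is not proved. You correctly observe that the zeroth mode leaves a deficit $\abs{a_0}^2\sinh^2T$ and that the refined Sturm bound $\sqrt{\Phi_1(T)}\ge\sqrt{\Phi_1(0)}\cosh T+\Psi_1'(0)\sinh T$ would close the gap provided
\[
\dot U(1)=\Phi'(0)\ \ge\ 2\abs{a_0}\sqrt{1-\abs{a_0}^2}.
\]
But this is exactly the hard step, and the heuristic you offer --- the area/winding identity $L(0^+)=1$, positivity of the Jacobian, monotonicity of $h\vert_{C_1}$ --- does not yield it. None of these ingredients produces a lower bound on $\Phi'(0)$ in terms of $\abs{a_0}$ alone; the winding identity and $J_h>0$ give $L'(t)>0$, which controls $\sum_n n\phi_n$, not $\Phi'(0)=\sum_n \phi_n'(0)$. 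The paper does \emph{not} attempt to prove any such pointwise inequality at $t=0$. Instead it takes a completely different route: for $R\le e^{3/2}$ the $\Ha_N$ case is absorbed into Theorem~\ref{thgene1}, whose proof rewrites $h=h^{1}g$ and integrates the weighted average $\K[U]=\int_1^R\frac{\rho(R^2-\rho^2)}{\rho^2+1}\LL^1[U]\,d\rho$ by parts, reducing nonnegativity of $\K[U]$ to a pointwise inequality on $\abs{g_z}^2$ and $\abs{g_{\bar z}}^2$ (formula~\eqref{Re14}); for $R>e$ it establishes the coefficient estimate of Lemma~\ref{LVest}, $\K[V]\ge(R^2-1)\abs{b_0}^2$, via explicit quadratic-form bounds on each Fourier block, and then uses $a_0=0$ (your $b_0=0$) to conclude $\K[U]\ge 0$. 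Neither step is visible from the first-order data at the inner circle; the global weighted integral is essential. So as it stands your proposal proves part~\eqref{thgene21} but leaves part~\eqref{thgene22} open.
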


Further related generalizations demand a few preliminary remarks.

\subsection{Extremal mappings}
Let us first look at the extremal harmonic mappings for the Dirichlet energy, see Section~\ref{secem} for details,
\begin{equation}\label{emf1}
h^\lambda (z)= \frac{1}{1+ \lambda} \left(z+ \frac{\lambda}{\overline{z}}\right) \colon \A  \to \A^\ast \, , \quad -1 < \lambda \le 1.
\end{equation}
The outer radius $R_\ast = \frac{R^2 + \lambda}{R+\lambda R}$ is smaller than $R$ if $\lambda >0$   and greater than $R$ if $-1 < \lambda <0$,  and $h^0$   is the identity mapping. These extremal mappings have vanishing average over the inner boundary, actually over any circle $C_\rho = \{z \colon |z|=\rho\} \subset \A$. Except for the critical case, corresponding to   $\lambda =1$, each $h^\lambda \colon \A \to \A^\ast$ has positive Jacobian determinant in the closure of $\A$.  Let us look more closely at    the {\it critical Nitsche mapping}
\[h^1 (z)= \frac{1}{2} \left(z + \frac{1}{\overline{z}}\right) = \frac{1}{2} \left(\rho + \frac{1}{\rho}\right) e^{i \theta}\, , \quad z \in \rho e^{i\theta}. \]
As opposed to other extremal mappings in ~\eqref{emf1}, the Jacobian determinant
\[J(z, h^1) = \left|h^1_z\right|^2 - \big|h^1_{\overline{z}}\big|^2 = \frac{|z|^4-1}{4|z|^4} \]
vanishes on  the inner boundary of $\A$. That is why perturbations of the boundary homeomorphisms $h^1 \colon C_1 \to C_1$ and $h^1 \colon C_R \to C_{R_\ast}$ might  destroy injectivity   inside $\A$. In fact,  we shall show that
\begin{theorem}\label{thmuni}(Uniqueness) For the critical configuration of annuli,  $R_\ast = \frac{1}{2}\left(R + \frac{1}{R}\right)$, we have
\[\Ha_D(\A, \A^\ast) = \Ha_N(\A, \A^\ast)= \{\alpha h^1 \colon \abs{\alpha}=1\}.\]
If in addition $\Mod \A \le 3/2$, then $\Ha(\A, \A^\ast)=  \{\alpha h^1 \colon \abs{\alpha}=1\}$.
\end{theorem}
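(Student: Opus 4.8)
The plan is to show that any $h \in \Ha_D(\A,\A^\ast) \cup \Ha_N(\A,\A^\ast)$ in the critical configuration must coincide, up to a unimodular rotation, with $h^1$; the statement for $\Ha(\A,\A^\ast)$ when $\Mod\A \le 3/2$ then follows by combining this with the inclusion obtained in the course of proving Theorem \ref{thgene1}. The natural strategy is to revisit the proof of Theorems \ref{thgene1} and \ref{thgene2} and extract the equality case. In those arguments one expands $h$ in a Fourier series in $\theta$, writes $h(\rho e^{i\theta}) = \sum_{n} c_n(\rho) e^{in\theta}$ where each coefficient $c_n(\rho)$ is (by harmonicity) a combination $a_n \rho^n + b_n \rho^{-n}$ (with a logarithmic term for $n=0$), and estimates $R_\ast(h)^2 = \lim_{\rho \nearrow R} \dashint_{C_\rho}|h|^2 = \sum_n |c_n(R)|^2$ from below. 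The lower bound $\tfrac12(R + 1/R)$ is forced essentially by the $n = \pm 1$ modes together with the constraint that the inner boundary values have modulus one (so $\sum |c_n(1)|^2 = 1$) and that the winding number is $+1$; the hypotheses $\Ha_D$ or $\Ha_N$ kill the $n=0$ term (respectively the harmonic–measure/log degree of freedom), which is precisely what is needed for the sharp inequality to hold for all moduli.

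First I would isolate the exact inequality chain used in the proof of Theorem \ref{thgene2} and identify every place a term was discarded. Equality in $R_\ast(h) = \tfrac12(R + 1/R)$ must force: (a) all Fourier modes $c_n$ with $|n| \ge 2$ vanish identically, (b) the $n = 0$ mode vanishes identically (automatic from $\Ha_D$, or from $\Ha_N$ after the normal–derivative normalization), and (c) the $n = \pm 1$ modes are exactly those of $\alpha h^1$, i.e. $c_1(\rho) = \tfrac{\alpha}{2}\rho$ and $c_{-1}(\rho) = \tfrac{\alpha}{2}\rho^{-1}$ for some $|\alpha| = 1$ — here one uses the boundary normalization $|h| = 1$ on $C_1$ to pin down the coefficients and the modulus of $\alpha$. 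So $h = \alpha h^1$ as a harmonic function. Since $\alpha h^1$ is a homeomorphism onto $\A^\ast$ in the critical configuration (this is classical, and is the content of the Nitsche mapping being admissible), the reverse inclusion $\{\alpha h^1 : |\alpha|=1\} \subset \Ha_D(\A,\A^\ast) = \Ha_N(\A,\A^\ast)$ also holds, giving equality of the three sets in the first display.

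For the final sentence, suppose $\Mod\A \le 3/2$ and $h \in \Ha(\A,\A^\ast)$ with $R_\ast = \tfrac12(R + 1/R)$. The proof of Theorem \ref{thmain}/\ref{thgene1} in this modulus range must produce, at equality, enough rigidity to conclude $h \in \Ha_D(\A,\A^\ast)$ (equivalently, that the offending $n=0$ term is absent); then the already–established case applies. Concretely I expect the $\Mod\A \le 3/2$ hypothesis enters via an estimate showing that a nonzero $n = 0$ mode strictly increases $R_\ast$ beyond the critical value, so equality forbids it. Once $h \in \Ha_D(\A,\A^\ast)$ we are done.

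The main obstacle is step (c) combined with the need to rule out the $n=0$ mode in the unrestricted class: establishing that equality in the Fourier–series lower bound truly forces \emph{every} non-$(\pm1)$ coefficient to vanish, rather than merely bounding a weighted sum of them. This requires that the lower bound for $R_\ast(h)^2$ was proved term-by-term (or via a quadratic form that is positive definite off the $n=\pm1$ subspace), with the inner-boundary constraint $\sum_n |c_n(1)|^2 = 1$ and the degree-one condition used sharply; I would need to verify that the inequalities underlying Theorems \ref{thgene1} and \ref{thgene2} were written in exactly that form, and if a term was estimated crudely, refine it enough to detect the equality case. Verifying that $\alpha h^1$ indeed lies in $\Ha_N$ (vanishing average of the normal derivative on $C_\rho$) is a short direct computation from the explicit formula $h^1(\rho e^{i\theta}) = \tfrac12(\rho + \rho^{-1})e^{i\theta}$, whose radial derivative is $\tfrac12(1 - \rho^{-2})e^{i\theta}$, manifestly of mean zero over $C_\rho$.
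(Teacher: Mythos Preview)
Your plan is correct and matches the paper's approach: the proof of Theorem~\ref{thmuni} in the paper is literally a one-line assembly of equality-case remarks (Remarks~\ref{rmuni1}--\ref{rmuni4}) embedded in the proofs of Theorems~\ref{thgene1} and~\ref{thgene2}, which carry out exactly the program you describe---tracking when the inequalities $\LL^\lambda[V]\ge 0$ and $\K^\lambda[U]\ge 0$ become equalities, using Proposition~\ref{varprop1} to force the form~\eqref{var8}, and (in the $\Mod\A\le 3/2$ case) using strict positivity of a quadratic form in $(a_0,b_0)$ to kill the $n=0$ mode. One small slip: the critical Nitsche map $h^1(z)=\tfrac12(z+1/\bar z)=\tfrac12(\rho+\rho^{-1})e^{i\theta}$ has only the $n=1$ Fourier mode in the paper's decomposition $h_n=a_n z^n+b_n\bar z^{-n}$ (with $a_1=b_1=\tfrac12$), not separate $n=\pm1$ modes as you wrote.
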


 A natural  generalization of the Nitsche bound in Theorems~\ref{thmain}-\ref{thgene2} is possible   once we realize that the conjectured extremal mapping $h^1$ represents so-called free harmonic evolutions of the unit circle. To be precise, we regard $h\in \Ha (\A, \As)$ as a function of concentric circles $C_\rho =\{z \colon \abs{z}=\rho\} $, $1 < \rho < R$, into Jordan curves $C_\rho^\ast= h(C_\rho)$ in $\As$. We refer to
 \[\lim_{\rho \searrow 1} \frac{d}{d\rho} \left(\dashint_{C_\rho} \abs{h}^2 \right)^\frac{1}{2}   \ge 0 \]
 as the initial speed so that  free evolution begins with zero initial speed. As might be expected positive initial speed, or simply  forced harmonic evolution, results in larger ratio $R_\ast/r_\ast$. The extremal mappings $h^\lambda$, $-1< \lambda \le 1$, are representatives of the forced evolutions of circles. Their  initial speed is:
 \begin{equation}\label{inispeed}
\lim_{\rho \searrow 1} \frac{d}{d\rho} \left(\dashint_{C_\rho} \abs{h^\lambda}^2 \right)^\frac{1}{2}  = \frac{1-\lambda}{1+\lambda}.
 \end{equation}
\begin{theorem}\label{thnew}
Suppose $h\in H_D(\A, \As)$ has the initial speed
\[\lim_{\rho \searrow 1} \frac{d}{d\rho} \left(\dashint_{C_\rho} \abs{h}^2 \right)^\frac{1}{2} = \frac{1-\lambda}{1+\lambda} .\]
 Then
\[\left(\dashint_{C_\rho} \abs{h}^2 \right)^{1/2} \ge \left(\dashint_{C_\rho} \abs{h^\lambda}^2 \right)^{1/2}= \frac{\rho^2 +\lambda}{(1+\lambda) \rho}  \]
for $1<\rho < R$. Equality occurs if and only if $h= \alpha h^\lambda$ for some $\abs{\alpha} =1$.
\end{theorem}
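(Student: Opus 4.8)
The plan is to follow the circular $L^2$-mean of $h$ as a function of $t=\log\rho$, reduce the claim to a linear second-order differential inequality, and then compare with the corresponding mean of $h^\lambda$.

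\emph{Set-up.} Harmonicity forces $h(\rho e^{i\theta})=\sum_{n\in\Z}h_n(\rho)e^{in\theta}$ with $h_0(\rho)=a_0+b_0\log\rho$ and $h_n(\rho)=a_n\rho^{|n|}+b_n\rho^{-|n|}$ for $n\ne0$, and $h\in\Ha_D(\A,\As)$ gives $a_0=\lim_{\rho\searrow1}\dashint_{C_\rho}h=0$. I would isolate the non-constant part by putting $F(t)^2:=\sum_{n\ne0}\bigl|h_n(e^t)\bigr|^2$; Parseval yields
\[\dashint_{C_\rho}|h|^2=\sum_{n\in\Z}|h_n(\rho)|^2=F(\log\rho)^2+|b_0\log\rho|^2\ge F(\log\rho)^2,\]
with equality iff $b_0=0$. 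Since $h$ is injective, hence non-constant on each $C_\rho$, Jensen's inequality gives $\dashint_{C_\rho}|h|^2>|h_0(\rho)|^2$, so $F^2$ is strictly positive and real-analytic on $(0,\log R)$. As $h$ carries the inner boundary of $\A$ onto the unit circle while $\As\subset\{|w|>1\}$, one has $\lim_{\rho\searrow1}\dashint_{C_\rho}|h|^2=1$, so $F(0)=1$; and since $h_0(1)=0$, the hypothesis on the initial speed gives $\dot F(0)=\tfrac{1-\lambda}{1+\lambda}$.

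\emph{The inequality $\ddot F\ge F$.} Write $f_n(t)=h_n(e^t)=a_ne^{|n|t}+b_ne^{-|n|t}$, so $\ddot f_n=n^2f_n$. With $\Phi:=F^2=\sum_{n\ne0}|f_n|^2$,
\[\dot\Phi=2\sum_{n\ne0}\re(\dot f_n\overline{f_n}),\qquad
\ddot\Phi=2\sum_{n\ne0}\bigl(n^2|f_n|^2+|\dot f_n|^2\bigr)\ \ge\ 2\Phi+2\sum_{n\ne0}|\dot f_n|^2,\]
the last step using $n^2\ge1$. Two applications of Cauchy--Schwarz give $(\dot\Phi)^2\le4\bigl(\sum|\dot f_n|^2\bigr)\bigl(\sum|f_n|^2\bigr)=4\Phi\sum|\dot f_n|^2$, i.e. $\tfrac{(\dot\Phi)^2}{2\Phi}\le2\sum|\dot f_n|^2$. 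Since $\Phi>0$, $\ddot F=\tfrac1{2\sqrt\Phi}\bigl(\ddot\Phi-\tfrac{(\dot\Phi)^2}{2\Phi}\bigr)\ge\sqrt\Phi=F$ on $(0,\log R)$.

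\emph{Comparison and conclusion.} The mean of $h^\lambda$ is $\psi^\lambda(t):=\bigl(\dashint_{C_{e^t}}|h^\lambda|^2\bigr)^{1/2}=\tfrac{e^t+\lambda e^{-t}}{1+\lambda}$, which solves $\ddot\psi^\lambda=\psi^\lambda$ with $\psi^\lambda(0)=1$, $\dot\psi^\lambda(0)=\tfrac{1-\lambda}{1+\lambda}$. Thus $w:=F-\psi^\lambda$ obeys $\ddot w\ge w$ on $(0,\log R)$ and $w(0)=\dot w(0)=0$. Then $u:=e^{-t}(\dot w+w)$ satisfies $\dot u=e^{-t}(\ddot w-w)\ge0$ and $u(0)=0$, so $u\ge0$, whence $\tfrac{d}{dt}(e^tw)=e^{2t}u\ge0$; with $e^0w(0)=0$ this gives $e^tw\ge0$, i.e. $F\ge\psi^\lambda$. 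Therefore
\[\Bigl(\dashint_{C_\rho}|h|^2\Bigr)^{1/2}\ge F(\log\rho)\ge\psi^\lambda(\log\rho)=\frac{\rho^2+\lambda}{(1+\lambda)\rho},\qquad 1<\rho<R.\]

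\emph{Equality, and the main obstacle.} If equality holds at some $\rho_0\in(1,R)$ then first $b_0\log\rho_0=0$ forces $b_0=0$, and second $w(\log\rho_0)=0$ together with the monotonicity of $e^tw$ forces $w\equiv0$ on $[0,\log\rho_0]$, hence $F\equiv\psi^\lambda$ on $(0,\log R)$ by real-analyticity. Reading the two estimates above as equalities, $\sum_{n\ne0}(n^2-1)|f_n|^2\equiv0$ kills every mode with $|n|\ge2$, and equality in Cauchy--Schwarz forces the proportionality $|\dot f_1|/|f_1|\equiv|\dot f_{-1}|/|f_{-1}|$. Because $h|_{C_1}$ is an orientation- and order-preserving homeomorphism onto the unit circle it has winding number $+1$; since $|h|=1$ on $C_1$ forces $f_1(0)\overline{f_{-1}(0)}=0$ and $f_1(0)=0$ would give winding $-1$, we get $f_{-1}(0)=0$, so $f_{-1}(t)=a_{-1}(e^t-e^{-t})$ and $|\dot f_{-1}|/|f_{-1}|=\coth t$ is unbounded near $0$ while $|\dot f_1|/|f_1|$ is bounded there, forcing $a_{-1}=0$. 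Hence $h=(a_1\rho+b_1\rho^{-1})e^{i\theta}$, and matching $|a_1\rho+b_1\rho^{-1}|^2$ with $(\psi^\lambda(\log\rho))^2$ gives $|a_1|=\tfrac1{1+\lambda}$ and $b_1=\lambda a_1$; writing $a_1=\alpha/(1+\lambda)$ with $|\alpha|=1$ yields $h=\alpha h^\lambda$. The converse is immediate since $\alpha h^\lambda\in\Ha_D(\A,\As)$ has initial speed $\tfrac{1-\lambda}{1+\lambda}$ and $|\alpha h^\lambda|=|h^\lambda|$. The points I expect to require the most care are the boundary normalizations $F(0)=1$ and the existence of $\dot F(0)$, which rest on boundary regularity of harmonic homeomorphisms of annuli, and the bookkeeping of the rigidity step that isolates the single mode $e^{i\theta}$ — in particular the winding-number argument ruling out the mode $e^{-i\theta}$.
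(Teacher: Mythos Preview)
Your proof is correct and takes a genuinely different route from the paper's. Both arguments ultimately rest on the orthogonal decomposition and the elementary fact $n^2\ge 1$ for $n\ne 0$, but they package it very differently. The paper introduces a one-parameter family of \emph{linear} second-order operators
\[
\LL^\lambda[V]=\frac{\rho^2+\lambda}{\rho^3}\,\frac{d}{d\rho}\!\left[\rho^3\frac{d}{d\rho}\Big(\frac{V}{\rho^2+\lambda}\Big)\right],
\]
proves the identity (Lemma~5.1) expressing $\LL^\lambda[U]$ as a circular mean, and deduces $\LL^\lambda[V]\ge 0$ by discarding a nonnegative term; the conclusion then follows by integrating the divergence form twice with the given initial data. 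You instead pass to $t=\log\rho$, take the square root $F=\sqrt{V}$, and prove the single $\lambda$-free \emph{nonlinear} inequality $\ddot F\ge F$ via Cauchy--Schwarz; the parameter $\lambda$ enters only through the initial conditions in the ODE comparison with $\psi^\lambda$. Your argument is more self-contained for this particular theorem --- it avoids the algebraic identity of Lemma~5.1 entirely --- while the paper's linear operator $\LL^\lambda$ pays dividends later, since the same machinery (and the weighted integral $\K^\lambda$) drives the proofs of Theorems~1.5 and~1.7.

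Your equality analysis is also correct but more hands-on than the paper's: the paper reads off the general solution of $\LL^\lambda[V]\equiv 0$ from an ODE in $\theta$ and then uses orientation to discard the $\bar z$-branch, whereas you isolate the modes $n=\pm 1$, use $|h|\equiv 1$ on $C_1$ and the winding number to force $f_{-1}(0)=0$, and then exploit the blow-up $|\dot f_{-1}|/|f_{-1}|=\coth t$ to kill that mode. The boundary issues you flag --- continuity of $F$ and $\dot F$ at $t=0$ --- are exactly what the paper handles in Section~3 (existence of $\dot U(1)$ via monotonicity of $\rho\,\dot U(\rho)$), so your concerns are well placed but resolvable with the same tools.
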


 \section{Background}\label{background}

%\subsection{Harmonic functions in the annulus}\label{sechf}
We shall work with the open annulus $\A = A(1,R)= \{z \colon 1 <\abs{z}<R\}$ whose inner boundary is the  unit circle $C_1= \{z \colon \abs{z}=1\}$. The basic complex harmonic functions in $\A$ are the integer powers $z^n$, $\bar z^n$, $n=0, \pm 1, \pm 2, \dots $, and the logarithm $\log \abs{z}$. If we combine these functions suitably in pairs, we obtain an orthogonal decomposition of a harmonic function,
\begin{equation}\label{hfeq1}
h(z)= \sum_{n\in \mathbb Z} h_n (z)
\end{equation}
where $h_n(z)= a_n z^n+b_n \bar z^{-n}$ for $n \ne 0$ and $h_0 (z)= a_0 \log \abs{z}+b_0$. For every circle $C_\rho = \{z \colon \abs{z}=\rho\}$, $1 \le \rho \le R$, we have
\begin{equation}\label{hfeq2}
\dashint_{C_\rho}h_n \bar h_m = \begin{cases} 0 & \mbox{ if } n \ne m \\
\abs{a_n \rho^n +b_n \rho^{-n}}^2 &  \mbox{ if } n = m \ne 0 \\
\abs{a_0 \log \rho  +b_0}^2 &  \mbox{ if } n = m = 0 .
\end{cases}
\end{equation}
It should be pointed out that $h$ need not be defined on the boundary of $\A$, but its components $h_n$ are well defined in the punctured complex plane $\C \setminus \{0\}$. The orthogonality of $h_n$, as well as their derivatives, will prove useful in the subsequent computations.

%\subsection{Polar coordinates}\label{secpc}
Given the rotational invariance of the annulus $\A$ and the radial symmetry of the extremal harmonic mappings, we shall express the complex variable $z=\rho\hspace{0.03cm} e^{i\theta}$ as a function of the polar coordinates $1< \rho <R$, $0 \le \theta < 2 \pi$. Then the Cauchy-Riemann derivatives are
\begin{equation}\label{cr}
\begin{split}
h_z&= \frac{\partial h}{\partial z}= \frac{1}{2}e^{-i\theta} \left(h_\rho - \frac{i}{\rho}h_\theta\right)\\
h_{\bar z}&= \frac{\partial h}{\partial \bar z}= \frac{1}{2}e^{i\theta} \left(h_\rho + \frac{i}{\rho}h_\theta\right).
\end{split}
\end{equation}
Hence the Laplacian
\begin{equation}\label{pceq3}
\Delta h = 4 \frac{\partial^2 h}{\partial z \partial \bar z} = \frac{1}{\rho} \frac{\partial}{\partial \rho} \left(\rho \frac{\partial h}{\partial \rho}\right) + \frac{1}{\rho^2} \frac{\partial^2 h}{\partial \theta^2}= h_{\rho \rho} + \frac{1}{\rho} h_\rho + \frac{1}{\rho^2} h_{\theta \theta}.
\end{equation}
We will use the Hilbert-Schmidt norm of the Jacobian matrix of $h$, which is equal to
\[\norm{Dh}^2 = 2 \left(\abs{h_z}^2 + \abs{h_{\bar z}}^2\right)= \abs{h_\rho}^2 + \rho^{-2} \abs{h_\theta}^2.\]
The Jacobian determinant is expressed in polar coordinates as
\[J(z,h)= \abs{h_z}^2 - \abs{h_{\bar z}}^2 = \frac{1}{\rho} \im \left( \bar h_\rho h_\theta \right).\]

%\subsection{Annuli {\bf (Consult with Leonid)}}\label{secan}
We shall now fix a round annulus
\[\A = A(1,R) = \{z \colon 1 < \abs{z}<R\}, \quad 1 <R < \infty   \]
and consider homeomorphisms $h \colon \A  \overset{\textnormal{\tiny{onto}}}{\longrightarrow} \As$. Here $\As\subset\C$ is a topological annulus, i.e., an open connected set whose complement  $\widehat{\C} \setminus \As$ consists of two disjoint nonempty closed sets. One of these sets contains $\infty$ and is called the outer component, denoted $G_O$. The other set is the inner components, denoted $G_I$. Our standing assumption is $G_I=\{z\colon \abs{z}\le 1\}$.

In general $h$ does not extend continuously to the closure of $\A$, yet it ``takes'' the boundary circles
\[C_1 = \{z \colon \abs{z}=1\} \quad \mbox{and} \quad C_R = \{z \colon \abs{z}=R\} \]
into two different components of $\partial \As$ in the sense of cluster sets~\cite[p. 156]{Nebook}.
We write such correspondences as
\[h \colon C_1 \rightsquigarrow h\{C_1\} \quad \mbox{and} \quad h \colon C_R \rightsquigarrow h\{C_R\}.\]
There are four homotopy classes of homeomorphisms $h \colon \A  \overset{\textnormal{\tiny{onto}}}{\longrightarrow} \As$, each determined by the orientation of $h$ and the order of the boundary components $h\{C_1\}$ and $h\{C_R\}$ in $\partial \A$. We are interested in the following class.
\begin{definition}
The class $\Ho (\A , \As)$ consists of all orientation preserving homeomorphisms $h \colon \A  \overset{\textnormal{\tiny{onto}}}{\longrightarrow} \As$ whose cluster set at the inner circle $C_1 \subset \partial \A$ is the inner boundary of $\As$. The subclass of harmonic mappings in  $\Ho (\A , \As)$ will be designated by the symbol  $\Ha (\A , \As)$.
\end{definition}
Recall from introduction that   $h\in \Ho (\A , \As)$ is viewed as a function of circles $C_\rho = \{\rho e^{i \theta} \colon 0 \le \theta < 2 \pi\}$ into Jordan curve $C_\rho^\ast=h(C_\rho)$, $1<\rho <R$. This function is called the {\it evolution of circles}. For the generalization of the Nitsche conjecture it will be essential to assume that the evolution begins with $h \colon C_1 \rightsquigarrow C_1$.  We then note that for $h \in \Ho (\A , \As)$ the function $z \to |h(z)|$ extends continuously to the inner circle of $\A$, with value $1$ therein.

\section{Circular means of a harmonic evolution}\label{seccm}
We shall now introduce a number of integral means over the circles $C_\rho$, $1<\rho<R$, first defined for harmonic functions $h \colon \A \to \C$ and then restricted to the mappings in $\Ha (\A , \As)$. Our aim is to indicate in some detail how to understand these integral means on the inner circle $C_1$, as  $h$ is not even defined on $C_1$.  The orthogonal decomposition~\eqref{hfeq1} of $h$ on $\A$ comes handful. Accordingly, the circular means
\begin{equation}\label{cm1}
\dashint_{C_\rho} h = \frac{1}{2 \pi \rho} \int_{C_\rho} h = a_0 \log \rho +b_0 , \quad 1 <\rho <R,
\end{equation}
extend continuously to the closed interval $[1,R]$,
\begin{equation}\label{cm2}
 \lim_{\rho \searrow 1} \dashint_{C_\rho} h =b_0.
\end{equation}
Their $\rho$-derivatives can also be given a meaning at the boundary circles,
\begin{equation}\label{cm3}
  \lim_{\rho \searrow 1} \frac{d}{d \rho} \dashint_{C_\rho} h =  \lim_{\rho \searrow 1}   \dashint_{C_\rho} h_\rho= a_0.
\end{equation}
Hereafter, passing with differentiation inside the integral is justified by a general commutation rule, which in symbols reads as
\begin{equation}\label{cm4}
 \frac{d}{d \rho} \dashint_{C_\rho} =  \dashint_{C_\rho} \frac{d}{d \rho}, \quad \mbox{ for } 1 < \rho <R.
\end{equation}
Next we introduce
\begin{equation}\label{cm5}
U=U(\rho)=U(\rho,h)= \dashint_{C_\rho}\abs{h}^2, \quad 1 < \rho <R
\end{equation}
and call $U(\rho)$ the {\it quadratic mean} of $h$ over the circle $C_\rho$. Using the Green's formula $\iint_{\Omega} \left(u \Delta v -v \Delta u\right) = \int_{\partial \Omega} \left(u \frac{\partial v}{\partial n}- v  \frac{\partial u}{\partial n}\right)$, with $u=1$ and $v=|h|^2$, this leads to
\begin{eqnarray}\label{cm6}
\int_{C_\rho} \abs{h^2}_\rho - \int_{C_r} \abs{h^2}_\rho &=& \iint_{r \le \abs{z} \le \rho} \Delta \abs{h}^2 = 4 \iint_{r \le \abs{z} \le \rho} \frac{\partial^2 \abs{h}^2}{\partial z \partial \bar z} \nonumber \\
&=&4  \iint_{r \le \abs{z} \le \rho} \left(\abs{h_z}^2 + \abs{h_{\bar z}}^2 \right)= 2  \iint_{r \le \abs{z} \le \rho} \norm{Dh}^2 .
\end{eqnarray}
Here $ 1 <r \le \rho <R$. By virtue of the commutation rule~\eqref{cm4} we obtain
\begin{equation}\label{cm7}
\rho \,  \dot{U}(\rho)- r \, \dot{U}(r) = 2 \iint_{r \le \abs{z} \le \rho} \norm{Dh}^2 , \quad 1 <r \le \rho <R
\end{equation}
where, as usual, dot over $U$ stands for the $\rho$-derivative of $U$. Differentiation of~\eqref{cm7} with respect to $\rho$ yields
\begin{equation}\label{cm8}
\frac{1}{\rho} \frac{d}{d \rho} \left[\rho \left( \frac{d}{d\rho} U \right) \right]= 4\pi \dashint_{C_\rho} \norm{Dh}^2>0 , \quad 1 < \rho <R.
\end{equation}
It should be said, therefore, that $U$ is a subsolution to the differential operator
\begin{equation}\label{cm9}
L=\frac{1}{\rho} \frac{d}{d \rho} \left[\rho   \frac{d}{d\rho}  \right] , \quad L[U] \ge 0 \mbox{ in } (1,R).
\end{equation}
We shall see in Section~\ref{secuc} many more second order differential operators to which various integral means are subsolutions. As regards the extension of $U(\rho)$ and its derivative $\dot{U}(\rho)$ to $[1,R)$, we restrict ourselves to the mappings $h\in \Ha (\A , \As)$, so that the function $z\to |h(z)|$ is continuous up to the inner circle of $\A$. In particular, setting $U(1)=1$ gives the desired continuous extension of $U(\rho)$ to all radii $1 \le \rho <R$. Then formula~\eqref{cm8} allows us to extend $\dot{U}(\rho)$ to $[1,R)$. To this end, we infer from~\eqref{cm8} that $\rho \,  \dot{U}(\rho)$ is strictly increasing. Hence
\[\dot{U}(\rho)= \frac{1}{\rho \log \rho} \int_1^\rho \rho \,  \dot{U}(\rho) \frac{\dtext t}{t} > \frac{1}{\rho \log \rho} \int_1^\rho t \, \dot{U}(t) \frac{\dtext t}{t}  = \frac{U(\rho)-U(1)}{\rho \log \rho}>0 .\]
We then conclude that the following limit exists, and is nonnegative.
\begin{equation}\label{uprime}
\lim_{\rho \searrow 1} \dot{U} (\rho) = \lim_{\rho \searrow 1} \rho\,  \dot{U} (\rho) =: \dot{U}(1) \ge 0.
\end{equation}
Now the function $\dot{U}(\rho)$, so defined at $\rho =1$, is clearly continuous on $[1,R)$. Moreover $\dot U (1)$ agrees with the usual definition of the derivative,
\[\frac{U(\rho)-1}{\rho-1} = \frac{1}{\rho-1} \int_1^\rho \dot{U}(t)\,  \dtext t \to \dot{U}(1) \quad  \mbox{ as } \rho \searrow 1.\]
Formula~\eqref{cm7} now remains valid for $r=1$,
\[2 \iint_{1 \le \abs{z} \le \rho} \norm{Dh}^2 = \rho \, \dot{U}(\rho)- \dot{U}(1) \le \rho \,  \dot{U} (\rho).\]
Hence
\begin{equation}\label{cm10}
\dot{U}(\rho) \ge \frac{2}{\rho} \iint_{1 \le \abs{z} \le \rho} \norm{Dh}^2>0 \quad \mbox{ for } 1 < \rho <R
\end{equation}
and, as a consequence, we infer that
\begin{proposition} Suppose $h\in \Ha(\A,\As)$.
Then the quadratic means $U(\rho)= \dashint_{C_\rho} |h|^2$ are strictly increasing and the integral of $\norm{Dh}^2$
over $A(1,\rho)$ is finite for $1<\rho<R$.
\end{proposition}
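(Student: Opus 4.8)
The plan is simply to assemble the estimates already obtained in this section. For the strict monotonicity of $U$, I would invoke inequality~\eqref{cm10}, which gives $\dot U(\rho)>0$ for every $\rho\in(1,R)$. Combined with the fact that $U$ extends continuously to $[1,R)$ with $U(1)=1$ — this uses the membership $h\in\Ha(\A,\As)$, which guarantees that $z\mapsto\abs{h(z)}$ is continuous up to $C_1$ with value $1$ there — a function that is continuous on $[1,R)$ and has positive derivative on the open interval is strictly increasing. This settles the first assertion.

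For the finiteness of $\iint_{A(1,\rho)}\norm{Dh}^2$, I would return to identity~\eqref{cm7} in the borderline form it assumes once $\dot U(1)$ has been shown to exist and be nonnegative, namely $2\iint_{1\le\abs{z}\le\rho}\norm{Dh}^2=\rho\,\dot U(\rho)-\dot U(1)$. Since $\dot U$ is continuous on $[1,R)$ — hence finite at each $\rho<R$ — the right-hand side is finite, giving the bound $\iint_{A(1,\rho)}\norm{Dh}^2\le\tfrac12\rho\,\dot U(\rho)<\infty$ for $1<\rho<R$. Together with~\eqref{cm10} this also shows the integral is strictly positive, so the picture is complete.

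Thus the proposition itself is a two-line deduction; the real content lies in the preparatory steps already carried out above: the commutation rule~\eqref{cm4}, the Green's-formula computation~\eqref{cm6}, its differentiated form~\eqref{cm8} identifying $U$ as a subsolution of $L$, and — the genuinely delicate point — using $L[U]\ge 0$ (equivalently, the monotonicity of $\rho\mapsto\rho\,\dot U(\rho)$) together with $U(1)=1$ to extend $\dot U$ continuously to $\rho=1$ with a nonnegative value, as in~\eqref{uprime}. The only place where one must be careful is the passage to the limit at the inner circle, since $h$ need not be defined on $C_1$; this is handled by working with the orthogonal components $h_n$ of the decomposition~\eqref{hfeq1}, which are defined on all of $\C\setminus\{0\}$, and by the hypothesis $h\in\Ha(\A,\As)$ forcing $\abs{h}\to 1$ on $C_1$. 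Once that extension is in hand, no further obstacle remains.
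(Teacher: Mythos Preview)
Your proposal is correct and matches the paper's approach exactly: the proposition is stated there ``as a consequence'' of the preceding development, and the two assertions are read off from~\eqref{cm10} and from the extension of~\eqref{cm7} to $r=1$ (which gives $2\iint_{A(1,\rho)}\norm{Dh}^2=\rho\,\dot U(\rho)-\dot U(1)<\infty$), precisely as you describe. Your summary of the preparatory steps---the commutation rule, the Green's-formula identity, the monotonicity of $\rho\,\dot U(\rho)$, and the delicate extension of $\dot U$ to $\rho=1$ using $h\in\Ha(\A,\As)$---is accurate and complete.
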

It is natural to define
\begin{equation}\label{UR}
U(R):=\lim_{\rho\nearrow R}U(r)=(R_*(h))^2\in [0,+\infty].
\end{equation}

When $\rho$ approaches the outer radius of $\A$, the integral $\iint_{A(1,\rho)}\norm{Dh}^2$
may grow to infinity. A short sketch proof of this fact runs somewhat as follows:

\begin{example}
Let $f \colon C_1 \to C_1$ be a given homeomorphism of the unit circle onto itself. We consider the Poisson integral extension of $f$ into the unit disk $\mathbb D$, still denoted by $f$. This extension is a homeomorphism of $\overline{ \mathbb D}$ onto itself by  the Rad\'o-Kneser-Choquet Theorem, and $C^\infty$-smooth diffeomorphism in the open disk by Lewy's Theorem,  see~\cite{Dub}. In general, the Dirichlet energy of $f$ need not be finite. The best that one can guarantee is that $\norm{Dh}$ lies in the Marcinkiewicz space $L^2_{\textnormal{weak}} (\mathbb D)$, see~\cite{IMS}. Having disposed of such a harmonic mapping $f \colon \mathbb D \to \mathbb D$ with infinite energy, we look at the inverse image of an annulus $A=A(r,1) \subset \mathbb D$, $0<r<1$, to observe that $f^{-1}(A)$ is a doubly connected domain with smooth boundaries (real analytic). There exists a conformal mapping $\varphi \colon \A  \overset{\textnormal{\tiny{onto}}}{\longrightarrow} f^{-1}(A)$ of a round annulus $\A=A(1,R)$ onto $f^{-1}(A)$. This mapping is a diffeomorphism up to the boundary of $\A$, even in a neighborhood of $\overline{\A}$. In this way we arrive at the harmonic homeomorphism with infinite energy
\[h= \frac{1}{r} (f \circ \varphi)  \colon A(1,R) \to A(1,R_\ast) , \quad R_\ast = \frac{1}{r}.\]
\end{example}
We turn next to the introduction of the variance of $h$. Observe that the circular means $\dashint_{C_\rho} h = a_0 \log \abs{z}+b_0$ form a harmonic function, so is the function $H=h- \dashint_{C_\rho} h $  whose quadratic average is the variance of $h$,
\[V=V(\rho)=V(\rho,h) = \dashint_{C_\rho} \left|h-  \dashint_{C_\rho}h  \right|^2 = \dashint_{C_\rho} |h|^2 - \left| \dashint_{C_\rho} h \right|^2 = U(\rho, H).\]
The orthogonal decomposition~\eqref{hfeq1} gives rise to a decomposition of the circular means, namely
\begin{equation}\label{cm11}
U=U(\rho,h)=\sum_{n\in \Z}U_n (\rho) , \quad U_n(\rho)=U(\rho, h_n)
\end{equation}
\begin{equation}\label{cm12}
V=V(\rho,h)=\sum_{n\ne 0}U_n (\rho) .
\end{equation}
We shall explore these formulas   throughout this paper.

At this stage we take advantage of the orthogonal decomposition  to deduce that $V(\rho)$ is convex in $\rho$.
\[V(\rho)= \sum_{n \ne 0} \left| a_n \rho^n + b_n \rho^{-n} \right|^2 =  \sum_{n \ne 0} \left(\abs{a_n}^2 \rho^{2n} + \abs{b_n}^2 \rho^{-2n} + 2 \re a_n \bar b_n  \right) .\]
The second derivative is indeed positive
\begin{equation}\label{second}
\ddot{V}(\rho)= \frac{2}{\rho^2} \sum_{n\in \Z} \left[n(2n-1) \abs{a_n}^2 \rho^{2n} + n(2n+1) \abs{b_n}^2 \rho^{-2n}\right]>0.\end{equation}
Thus, $V$ is a subsolution to the operator $\frac{d^2}{d \rho^2}$. However, this operator is not good enough for the conclusion of the Nitsche conjecture, because the critical Nitsche mapping fails to be a solution.

\section{The extremal mappings}\label{secem}
Let us look more closely at the   minimizers of  the Dirichlet integral
\begin{equation}\label{em1}
\mathcal E[h] = \iint_{\A} \norm{Dh}^2
\end{equation}
subject to all homeomorphisms $h\in \Ho(\A, \A^\ast)$, between round annuli $\A=A(1,R)$ and $\A^\ast=A(1,R_\ast)$. We invoke the results in \cite{AIM, IO}. Within the Nitsche range~\eqref{nitschebound} for the annuli $\A$ and $\A^\ast$ the minimum is obtained (uniquely up to rotation) by the harmonic mapping
\begin{equation}\label{em2}
h^\lambda (z)= \frac{1}{1+\lambda} \left(z + \frac{\lambda}{\bar z}\right) \quad \mbox{where }  \lambda = \frac{R^2-RR_\ast}{RR_\ast-1} \in (-1,1].
\end{equation}
Outside the Nitsche range~\eqref{nitschebound} for the annuli $\A$ and $\A^\ast$ the infimum of $\mathcal H (\A,\A^\ast)$ is not attained~\cite{AIM, IO}. It is a general fact concerning mappings between domains in $\C$ that any minimizer of the Dirichlet energy is harmonic outside the branch set. The reader may wish to know that the nonharmonic mapping $h(z)= z/|z|$ of $\A$ onto the unit circle is a minimizer of the Dirichlet integral~\cite{IO}. We then see that the Nitsche conjecture, if true, would imply nonexistence of minimizers outside of the Nitsche bound but not vice versa.

 If one thinks of $h^\lambda$ as evolution of circles in $A(1,R)$, then the parameter $\lambda$, $-1< \lambda \le 1$, tells us about the initial speed of the evolution. Let us denote by $r_\ast (\rho, h^\lambda)$  the radii of the circles $C_\rho^\ast=h^\lambda (C_\rho)$. It is easily seen that
\[\dot{r}_\ast (1,h^\lambda)= \frac{d}{d \rho} r_\ast (\rho, h^\lambda) \Big|_{\rho=1} = \frac{1-\lambda}{1+\lambda}\in [0, \infty).\]
This observation will be useful in Section~\ref{aftervar} where we show that  $h^\lambda$ also serves as extremal among all   harmonic evolutions $h\in \Ha_D(\A, \As)$ of the given initial speed.

\section{Convexity operators}\label{secuc}
The idea is to generalize the usual convexity operator $\frac{d^2}{d \rho^2}$ in such a way that variance $V=V(\rho)= V(\rho, h)$ of all complex harmonic functions would satisfy the inequality
\[\LL [V]:= \ddot{V} + A(\rho) V + B(\rho)V \ge 0\]
with equality occurring exactly for the extremal mapping $h^\lambda = \frac{1}{1+\lambda} \left(z + \frac{\lambda }{\bar z}\right)$, $-1 < \lambda \le 1$. This goal will be carried uniquely through the following operator
\begin{equation}\label{uc1}
\LL^\lambda = \frac{d^2}{d \rho^2} + \frac{3 \lambda - \rho^2}{\rho (\rho^2 +\lambda)} \frac{d}{d \rho} - \frac{8\lambda}{(\rho^2+\lambda)^2}
\end{equation}
or, in divergence form
\begin{equation}\label{uc2}
\LL^\lambda [V] = \frac{\rho^2 + \lambda}{\rho^3} \frac{d}{d \rho} \left[ \rho^3 \frac{d}{d \rho} \left(\frac{V}{\rho^2 + \lambda}\right) \right]
\end{equation}
which is defined for any $V\in C^2 (1,R)$. Before proving the above property of $\LL^\lambda$, see Proposition~\ref{varprop1},  two other identities are worth noting.
\begin{lemma}\label{uclem1}
Let $h$ be a complex harmonic function on $\A=A(1,R)$ and $U=U(\rho)= \dashint_{C_\rho}|h|^2$. Then
\begin{equation}\label{uc3}
\LL^\lambda [U]=2  \dashint_{C_{\rho}}\left[\norm{Dh}^2
-\frac{1}{\rho}\frac{d}{d\rho}\left(\frac{\rho^2-\lambda}{\rho^2+\lambda}\abs{h}^2\right)\right]
\end{equation}
\begin{equation}\label{uc4}
\LL^\lambda [U]= \frac{2}{\rho^2}\,\dashint_{C_{\rho}}\left[\abs{h_{\theta}}^2-\abs{h}^2
+(\rho^2+\lambda)^2\left|\frac{d}{d\rho}\left(\frac{\rho}{\rho^2+\lambda}h\right)\right|^2\right].
\end{equation}
\end{lemma}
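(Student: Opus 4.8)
The plan is to treat both formulas as statements about the second-order ordinary differential operator $\LL^\lambda$ applied to the single function $U=U(\rho)$; harmonicity of $h$ enters only through two facts recorded in Section~\ref{seccm}. First, because $\Delta\abs{h}^2=4\bigl(\abs{h_z}^2+\abs{h_{\bar z}}^2\bigr)=2\norm{Dh}^2$, the computation behind \eqref{cm6}--\eqref{cm8} gives
\[\frac1\rho\frac{d}{d\rho}\bigl(\rho\,\dot U\bigr)=2\dashint_{C_\rho}\norm{Dh}^2.\]
Second, the commutation rule \eqref{cm4} lets us differentiate under the circular average.

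I would begin with a purely one-dimensional identity valid for every $U\in C^2(1,R)$:
\begin{equation}\label{odeident}
\LL^\lambda[U]=\frac1\rho\frac{d}{d\rho}\!\left(\rho\,\frac{dU}{d\rho}\right)-\frac2\rho\frac{d}{d\rho}\!\left(\frac{\rho^2-\lambda}{\rho^2+\lambda}\,U\right).
\end{equation}
Using $\frac{d}{d\rho}\frac{\rho^2-\lambda}{\rho^2+\lambda}=\frac{4\lambda\rho}{(\rho^2+\lambda)^2}$, the right-hand side of \eqref{odeident} expands to $U''+\bigl(\frac1\rho-\frac{2(\rho^2-\lambda)}{\rho(\rho^2+\lambda)}\bigr)U'-\frac{8\lambda}{(\rho^2+\lambda)^2}U$, and since $\frac1\rho-\frac{2(\rho^2-\lambda)}{\rho(\rho^2+\lambda)}=\frac{3\lambda-\rho^2}{\rho(\rho^2+\lambda)}$ this is exactly the non-divergence form \eqref{uc1}. (Equivalently one may expand the divergence form \eqref{uc2}.) This step is routine rational-function algebra.

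To obtain \eqref{uc3}, I would substitute $U=\dashint_{C_\rho}\abs{h}^2$ into \eqref{odeident}. The first term on the right becomes $2\dashint_{C_\rho}\norm{Dh}^2$ by the first fact above; in the second term $\frac{\rho^2-\lambda}{\rho^2+\lambda}$ depends on $\rho$ only, so \eqref{cm4} gives $\frac1\rho\frac{d}{d\rho}\bigl(\frac{\rho^2-\lambda}{\rho^2+\lambda}U\bigr)=\dashint_{C_\rho}\frac1\rho\frac{d}{d\rho}\bigl(\frac{\rho^2-\lambda}{\rho^2+\lambda}\abs{h}^2\bigr)$. Pulling the common factor $2$ out of the average yields \eqref{uc3}.

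Finally \eqref{uc4} follows from \eqref{uc3} once one checks that the two integrands coincide pointwise on each circle $C_\rho$. From $(\rho^2+\lambda)\frac{d}{d\rho}\bigl(\frac{\rho}{\rho^2+\lambda}h\bigr)=\frac{\lambda-\rho^2}{\rho^2+\lambda}h+\rho h_\rho$ one gets
\[(\rho^2+\lambda)^2\Bigl|\tfrac{d}{d\rho}\bigl(\tfrac{\rho}{\rho^2+\lambda}h\bigr)\Bigr|^2=\frac{(\rho^2-\lambda)^2}{(\rho^2+\lambda)^2}\abs{h}^2+\rho^2\abs{h_\rho}^2-\frac{2\rho(\rho^2-\lambda)}{\rho^2+\lambda}\re(\bar h\,h_\rho),\]
so, using $\abs{h_\theta}^2+\rho^2\abs{h_\rho}^2=\rho^2\norm{Dh}^2$, the bracket in \eqref{uc4} divided by $\rho^2$ equals $\norm{Dh}^2-\frac{\abs{h}^2}{\rho^2}+\frac{(\rho^2-\lambda)^2}{\rho^2(\rho^2+\lambda)^2}\abs{h}^2-\frac{2(\rho^2-\lambda)}{\rho(\rho^2+\lambda)}\re(\bar h\,h_\rho)$. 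On the other hand, expanding $\frac1\rho\frac{d}{d\rho}\bigl(\frac{\rho^2-\lambda}{\rho^2+\lambda}\abs{h}^2\bigr)$ with $\frac{d}{d\rho}\abs{h}^2=2\re(\bar h\,h_\rho)$ shows the integrand of \eqref{uc3} equals $\norm{Dh}^2-\frac{4\lambda}{(\rho^2+\lambda)^2}\abs{h}^2-\frac{2(\rho^2-\lambda)}{\rho(\rho^2+\lambda)}\re(\bar h\,h_\rho)$; the two expressions agree because $(\rho^2-\lambda)^2-(\rho^2+\lambda)^2=-4\lambda\rho^2$. The one point that genuinely needs care, and the main obstacle such as it is, is fixing the numerical constant in $\frac1\rho\frac{d}{d\rho}(\rho\dot U)=2\dashint_{C_\rho}\norm{Dh}^2$ and justifying the interchange of $\frac{d}{d\rho}$ with $\dashint_{C_\rho}$; both are handled in Section~\ref{seccm}, after which the whole lemma is a finite computation with rational functions of $\rho$.
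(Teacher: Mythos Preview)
Your argument is correct and follows essentially the same route as the paper. The only organizational difference is that you isolate the one-variable identity $\LL^\lambda[U]=\frac1\rho\frac{d}{d\rho}(\rho\,\dot U)-\frac{2}{\rho}\frac{d}{d\rho}\bigl(\frac{\rho^2-\lambda}{\rho^2+\lambda}U\bigr)$ and then invoke the already-established $\frac1\rho\frac{d}{d\rho}(\rho\,\dot U)=2\dashint_{C_\rho}\norm{Dh}^2$, whereas the paper recomputes $\dot U$ and $\ddot U$ directly from the Laplace equation in polar coordinates, substitutes into \eqref{uc1}, and then regroups the resulting integrand in two ways to obtain \eqref{uc3} and \eqref{uc4}; the underlying algebra is identical.
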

\begin{proof}
In view of the commutation rule~\eqref{cm4} we find that
differentiation yields
\[
\dot{U}(\rho)=  \dashint_{C_\rho} \abs{h^2}_\rho=2 \dashint_{C_\rho} \re \left(\bar h h_\rho \right)
\]
and
\[
\ddot{U}(\rho)= 2 \dashint_{C_\rho}\left[\, \abs{h_{\rho}}^2+\re(\bar h h_{\rho\rho})\, \right].
\]
Since $h$ is harmonic, we have the Laplace equation $h_{\rho \rho}=-h_\rho/\rho-h_{\theta \theta}/\rho^2$. Integrating $\bar h h_{\theta \theta}$ by parts over the circle $C_\rho$ yields
\[\ddot{U}(\rho) = \dashint_{C_\rho} \left[ 2 \abs{h_\rho}^2 - \frac{1}{\rho} \abs{h^2}_\rho + \frac{2}{\rho^2}\abs{h_\theta}^2  \right].\]
Substitute these values of $U$, $\dot{U}$ and $\ddot{U}$ into~\eqref{uc1} to obtain
\[\LL^\lambda [U] = \dashint_{C_\rho} \left[ 2 \abs{h_\rho}^2 + \frac{2}{\rho^2} \abs{h_\theta}^2 -2 \frac{\rho^2-\lambda}{\rho (\rho^2+\lambda)} \abs{h^2}_\rho - \frac{8 \lambda}{(\rho^2+\lambda)^2} \abs{h}^2  \right].\]
One way to continue this computation  is to express $\norm{Dh}^2$ in polar coordinates $\norm{Dh}^2 = \abs{h_\rho}^2 + \rho^{-2} \abs{h_\theta}^2$ and group the remaining terms to arrive at formula~\eqref{uc3}. For   formula~\eqref{uc4}, however, we proceed  in the following way
\begin{equation}
\begin{split}
\LL^{\lambda}[U]&=\frac{2}{\rho^2}\,\dashint_{C_\rho}\left\{\abs{h_{\theta}}^2+\rho^2\abs{h_{\rho}}^2
-\frac{\rho(\rho^2-\lambda)}{(\rho^2+\lambda)}\abs{h^2}_{\rho}-\frac{4\lambda\rho^2}{(\rho^2+\lambda)^2}\abs{h}^2\right\} \nonumber \\
&=\frac{2}{\rho^2}\,\dashint_{C_{\rho}}\left\{\abs{h_{\theta}}^2-\abs{h}^2
+ \left[\frac{(\rho^2-\lambda)^2}{(\rho^2+\lambda)^2} \abs{h}^2+ \rho^2 \abs{h_\rho}^2 - \frac{\rho (\rho^2 -\lambda)}{\rho^2 +\lambda} \abs{h^2}_\rho  \right]  \right\} .\nonumber
\end{split}
\end{equation}
It only remains to verify that the expression in the rectangular parentheses coincides with the term
\begin{equation}
\begin{split}
& (\rho^2+\lambda)^2 \left| \frac{d}{d\rho} \left( \frac{\rho h}{\rho^2 +\lambda} \right) \right|^2 = (\rho^2+\lambda)^2 \left| \frac{\rho h_\rho}{\rho^2 +\lambda}- \frac{\rho^2 -\lambda}{(\rho^2 +\lambda)^2}h \right|^2\\
&= (\rho^2+\lambda)^2 \Bigg[ \frac{\rho^2 \abs{h_\rho}^2}{(\rho^2 +\lambda)^2} + \frac{(\rho^2 -\lambda)^2}{(\rho^2+\lambda)^4}|h|^2    -2 \frac{\rho (\rho^2-\lambda)}{(\rho^2+\lambda)^3} \re \bar h h_\rho  \Bigg] \\
&= \frac{(\rho^2-\lambda)^2}{(\rho^2 + \lambda)^2} + \rho^2 \abs{h_\rho}^2 -  \frac{\rho (\rho^2-\lambda)}{\rho^2 + \lambda } \abs{h^2}_\rho
\end{split}
\end{equation}
as desired.
\end{proof}
Two special cases are worth noting. First, the operator $\LL^1$ for the critical Nitsche mapping $h^1= \frac{1}{2} \left(z+\frac{1}{\bar z}\right)$  takes the form
\[\LL^1 = \frac{d^2}{d \rho^2} + \frac{3-\rho^2}{\rho (\rho^2 +1)} \frac{d}{d \rho} - \frac{8}{(\rho^2+1)^2}, \quad \LL^1[U]=0 \quad \mbox{with } U= \frac{(\rho^2+1)^2}{4 \rho^2}.\]
On the other hand, letting $\lambda =0$, we obtain the operator associated with the identity mapping $h^0(z)=z$,
\[\LL^0= \frac{d^2}{d \rho^2} - \frac{1}{\rho} \frac{d}{d\rho}, \quad  \LL^0 [U]=0 \quad \mbox{with } U=\rho^2.\] For $\lambda =1$ the critical Nitsche mapping $h^1$ represents harmonic mapping $h\in \Ha (\A, \As)$ with vanishing normal derivative on the inner circle. Such mappings extend harmonically beyond the unit circle by reflection
\[h(z)= h\left(1/ \bar z\right) \quad \quad \mbox{ for } z\in A(1/R, 1).\]
The extended mapping is a double cover of $\As$. For $\lambda =0$, on the other hand, the identity mapping $h^0(z)=z$ represents, in particular, all conformal evolutions of the unit circle.
\begin{lemma}\label{uclem2}
Every evolution of circles in $\A$ generated by a conformal mapping $h\in \Ha (\A, \As)$ begins with the unit speed; that is,
\[\dot{r}_\ast (1,h)= \lim_{\rho \searrow 1} \frac{d}{d \rho} \left( \dashint_{C_\rho} \abs{h}^2\right)^{1/2}  =1.\]
\end{lemma}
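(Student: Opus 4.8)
The plan is to reduce the statement to the single identity $\dot U(1)=2$, where $U(\rho)=\dashint_{C_\rho}|h|^2$, and then to read off the value $2$ from a winding number. Since $r_\ast(\rho,h)=\sqrt{U(\rho)}$, the quantity in question is
\[
\dot r_\ast(1,h)=\lim_{\rho\searrow 1}\frac{d}{d\rho}\sqrt{U(\rho)}=\lim_{\rho\searrow 1}\frac{\dot U(\rho)}{2\sqrt{U(\rho)}} .
\]
For $h\in\Ha(\A,\As)$ we already know $U(1)=1$ and that $U,\dot U$ extend continuously to $\rho=1$ (see~\eqref{uprime} and the surrounding discussion), so it suffices to show $\dot U(1)=2$.

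First I would use conformality to rewrite $\dot U$. Because $h$ is holomorphic, $h_\rho=e^{i\theta}h'(z)$ and $h_\theta=i\rho e^{i\theta}h'(z)$, hence $h_\rho=-\frac{i}{\rho}h_\theta$; substituting this into $\dot U(\rho)=2\dashint_{C_\rho}\re(\bar h\,h_\rho)$ (valid by the commutation rule~\eqref{cm4}) yields
\[
\dot U(\rho)=\frac{2}{\rho}\,\dashint_{C_\rho}\im(\bar h\,h_\theta)=\frac{1}{\pi\rho}\int_0^{2\pi}\im\bigl(\bar h\,h_\theta\bigr)\,\dtext\theta,\qquad 1<\rho<R .
\]

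Second, I would pin down the right-hand side by a winding-number count. Since $h\in\Ha(\A,\As)$ is a homeomorphism, $h(C_\rho)$ is a Jordan curve separating the two complementary components of $\As$, hence separating $0$ (in the inner component, which contains $\{|z|\le 1\}$) from $\infty$; as $h$ is orientation preserving, $h(C_\rho)$ winds exactly once positively about $0$, i.e.
\[
\int_0^{2\pi}\frac{h_\theta}{h}\,\dtext\theta=\int_{C_\rho}\frac{h'(z)}{h(z)}\,\dtext z=2\pi i,\qquad 1<\rho<R .
\]
Because the inner boundary of $\As$ is the real-analytic circle $C_1$, the conformal map $h$ extends holomorphically across $C_1$ by Schwarz reflection $h(z)=1/\overline{h(1/\bar z)}$; in particular $h$ and $h_\theta$ extend continuously to $C_1$, where $|h|\equiv 1$ forces $\bar h=1/h$. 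Letting $\rho\searrow 1$ in the two displays and using this continuity,
\[
\dot U(1)=\lim_{\rho\searrow 1}\frac{1}{\pi\rho}\int_0^{2\pi}\im(\bar h\,h_\theta)\,\dtext\theta=\frac{1}{\pi}\int_0^{2\pi}\im\Bigl(\frac{h_\theta}{h}\Bigr)\,\dtext\theta=\frac{1}{\pi}\cdot 2\pi=2 ,
\]
whence $\dot r_\ast(1,h)=\dot U(1)/(2\sqrt{U(1)})=1$.

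The step demanding the most care is the boundary analysis at $C_1$: one must know that $h$ is continuous up to the inner circle with $h(C_1)=C_1$ — the Carathéodory-type boundary correspondence for doubly connected domains with Jordan-curve boundary components — after which Schwarz reflection makes both the passage $\rho\searrow 1$ and the substitution $\bar h=1/h$ legitimate. A second point to get right is the orientation in the winding count: that $h(C_\rho)$ winds $+1$ rather than $-1$ about $0$ uses both that $h$ preserves orientation and that it carries the inner boundary of $\A$ to the inner boundary of $\As$, the component enclosing the origin. The computation of $\dot U$ itself is routine once conformality is invoked.
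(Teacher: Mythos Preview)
Your argument is correct and follows essentially the same route as the paper: extend $h$ across $C_1$ by Schwarz reflection, use the Cauchy--Riemann relation $h_\rho=-\frac{i}{\rho}h_\theta$ to convert the radial derivative into a tangential one, and then evaluate the result at $\rho=1$ via the winding number together with $\bar h=1/h$ on $C_1$. The only cosmetic difference is that you isolate the reduction to $\dot U(1)=2$ explicitly, whereas the paper computes $\dot r_\ast(1,h)=\tfrac12\dashint_{C_1}|h^2|_\rho$ directly.
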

\begin{proof}
First, we extend $h$ conformally to the annulus $A(1/R,R)$, by reflection
\[h(z)= \overline{[h(1/\bar z)]^{-1}} , \quad \mbox{ for } \frac{1}{R}< \abs{z} \le 1.\]
The Cauchy-Riemann system $h_{\bar z}=0$ reads in polar coordinates as $h_\rho = \frac{-i}{\rho}h_\theta$, and  the winding number of $h$ equals 1.  Hence,
\[\dashint_{C_\rho} \frac{-i h_\theta}{h}=1, \quad \quad \mbox{for every } \frac{1}{R} < \rho < R.\]
Now   the computation of the initial speed proceeds as follows.
\begin{equation}
\begin{split}
\dot{r}_\ast (1,h)&=   \frac{1}{2} \dashint_{C_1} \abs{h^2}_\rho =  \frac{1}{2} \dashint_{C_1} \left( h_\rho \bar h + \bar h_\rho h \right)\\
&=  \frac{1}{2} \dashint_{C_1} \frac{-i h_\theta}{h} +  \frac{1}{2} \overline{ \dashint_{C_1}  \frac{-i h_\theta}{h} }=1
\end{split}
\end{equation}
as claimed.
\end{proof}

\section{Variance is a subsolution to all $\LL^{\lambda}$, $-1<\lambda\le 1$.}\label{secvs}

The quadratic means of a harmonic function $H=h-\dashint_{C_{\rho}}h$  are none other than   the variance of $h$ which  can be computed by using
orthogonal decomposition~\eqref{hfeq1}.
\begin{equation}\label{var1}
V(\rho,h)=U(\rho,H)=\sum_{n\ne 0}U(\rho,h_n).
\end{equation}
We employ formula~\eqref{uc4} for $\LL^{\lambda}$. Neglecting the nonnegative term
\begin{equation}\label{var2}
\left|\frac{d}{d\rho}\left(\frac{\rho}{\rho^2+\lambda}H\right)\right|^2\ge 0
\end{equation}
yields the desired inequality
\begin{equation}\begin{split}\label{var3}
\LL^{\lambda}[V]&\ge\frac{2}{\rho^2}\dashint_{C_{\rho}}(\abs{H_{\theta}}^2-\abs{H}^2) \\
&=\frac{2}{\rho^2}\sum_{n\ne 0}\dashint_{C_{\rho}}\left(\left|\frac{d}{d\theta}h_n\right|^2-\abs{h_n}^2\right) \\
&=\frac{2}{\rho^2}\sum_{n\ne 0}(n^2-1)U(\rho,h_n)\ge 0.
\end{split}\end{equation}

We are now  interested in when  equality  $\LL^{\lambda}[V]= 0$ occurs. For this, we must have equality at~\eqref{var2},
which yields
\begin{equation*}
H(\rho e^{i\theta})=\frac{1}{1+\lambda}\left(\rho+\frac{\lambda}{\rho}\right)C(\theta),
\end{equation*}
where $C=C(\theta)$ is a $2\pi$-periodic function in $0\le\theta\le 2\pi$. As $H$ is harmonic, using the Laplace equation
in polar coordinates, we find that
\begin{equation*}\begin{split}
0&=\Delta H=H_{\rho\rho}+\frac{1}{\rho}H_{\rho}+\frac{1}{\rho^2}H_{\theta\theta} \\
&=\frac{\rho^2+\lambda}{(1+\lambda)\rho^3}[C(\theta)+C''(\theta)].
\end{split}\end{equation*}
The general solution to this ODE is $C(\theta)=\alpha \, e^{i\theta}+\beta \, e^{-i\theta}$, so
\begin{equation*}
H(z)=\frac{\alpha}{1+\lambda}\left(z+\frac{\lambda}{\bar z}\right)+\frac{\beta}{1+\lambda}\left(\bar z+\frac{\lambda}{z}\right)
=\alpha\,  h^{\lambda}(z)+\beta \, \overline{h^{\lambda}}(z),\quad \alpha,\beta\in\C.
\end{equation*}
Hence
\begin{equation}\label{var4}
h(z)=\frac{\alpha}{1+\lambda}\left(z+\frac{\lambda}{\bar z}\right)+\frac{\beta}{1+\lambda}\left(\bar z+\frac{\lambda}{z}\right)
+a_0\log \abs{z}+b_0.
\end{equation}
Of additional interest to us is the case when $h\in \Ha(\A,\As)$. This $h$, given by~\eqref{var4}, has modulus $1$ on $C_1$.
In polar coordinates,
\begin{equation}\label{var5}
h(e^{i\theta})=\alpha e^{i\theta}+\beta e^{-i\theta}+b_0.
\end{equation}
It is easily seen that the condition $\abs{h(e^{i\theta})}^2\equiv 1$ yields
\begin{equation}\label{var6}
\abs{\alpha}^2+\abs{\beta}^2+\abs{b_0}^2=1,\quad \beta\bar\alpha=0, \quad b_0\bar\beta+\bar b_0\alpha=0.
\end{equation}
The possibility $\alpha=\beta=0$ is ruled out because $h(e^{i\theta})\not\equiv\const$. This leaves two cases:

\underline{Case 1}, $\alpha=0$ and $\beta\ne0$. The equations~\eqref{var6} reduce to $\abs{\beta}=1$ and $b_0=0$, so we obtain
\begin{equation*}
h(z)=\frac{\beta}{1+\lambda}\left(\bar z+\frac{\lambda}{z}\right)+a_0\log\abs{z}.
\end{equation*}
Next we look at how this mapping stands up to the circular mean of the Jacobian determinant $J(z,h)=\abs{h_z}^2-\abs{h_{\bar z}}^2$,
where
\begin{equation*}
h_z=-\frac{\beta\lambda}{(1+\lambda)z^2}+\frac{a_0}{2z}\quad
\text{and} \quad h_{\bar z}=\frac{\beta}{1+\lambda}+\frac{a_0}{2\bar z}.
\end{equation*}
Using orthogonality of the power functions on circles, we obtain a contradiction
\begin{equation*}
0\le \dashint_{C_{\rho}}J(z,h)=\frac{\abs{\beta}^2}{(1+\lambda)^2}\left(\frac{\lambda^2}{\rho^4}-1\right)<0
\quad \text{for }\rho>1.
\end{equation*}
Therefore, the only possibility is:

\underline{Case 2}: $\beta=0$ and $\alpha\ne 0$. As before, the mapping $h$ takes the form
\begin{equation*}
h(z)=\frac{\alpha}{1+\lambda}\left(z+\frac{\lambda}{\bar z}\right)+a_0\log\abs{z}.
\end{equation*}
We just proved the following

\begin{proposition}\label{varprop1}
Variance $V=V(\rho)$ of a harmonic function $h\colon A(1,R)\to\C$ is a subsolution to all operators
$\LL^{\lambda}$ with $-1<\lambda\le 1$; that is, $\LL^{\lambda}[V]\ge 0$. Equality $\LL^{\lambda}[V]\equiv 0$,
for some $\lambda$, occurs if and only if
\begin{equation}\label{var7}
h(z)=b_0+a_0\log\abs{z}+\frac{\alpha}{1+\lambda}\left(z+\frac{\lambda}{\bar z}\right)+\frac{\beta}{1+\lambda}\left(\bar z+\frac{\lambda}{z}\right)
\end{equation}
where $a_0,b_0,\alpha,\beta$ are arbitrary complex coefficients. If, moreover, $h\in\Ha(\A,\As)$, then
 \begin{equation}\label{var8}
h(z)=a_0\log\abs{z}+\frac{\alpha}{1+\lambda}\left(z+\frac{\lambda}{\bar z}\right),\quad  \mbox{with }\abs{\alpha}=1.
\end{equation}
Here the coefficient  $a_0$ must be small enough so that $h$ is injective in the entire annulus $\A$.
\end{proposition}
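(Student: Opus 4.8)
The plan is to read the proposition off the identity~\eqref{uc4} of Lemma~\ref{uclem1}, specialized to the function $H = h - \dashint_{C_\rho} h$, since $V(\rho,h) = U(\rho,H)$ by definition of the variance. Applying~\eqref{uc4} to $H$ and using the orthogonal decomposition~\eqref{hfeq1} --- so that $H = \sum_{n\ne 0} h_n$ and on each circle $C_\rho$ one has $(h_n)_\theta = i n\, h_n$ --- the average of $|H_\theta|^2 - |H|^2$ becomes $\sum_{n\ne 0}(n^2-1)\,U(\rho,h_n)\ge 0$, while the remaining term $(\rho^2+\lambda)^2\bigl|\frac{d}{d\rho}(\frac{\rho}{\rho^2+\lambda}H)\bigr|^2$ is pointwise nonnegative. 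Adding the two gives $\LL^\lambda[V]\ge 0$. This half is essentially bookkeeping.

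For the equality discussion, $\LL^\lambda[V]\equiv 0$ forces both nonnegative contributions to vanish identically. Vanishing of the $\rho$-derivative term says $\frac{\rho}{\rho^2+\lambda}H$ is independent of $\rho$, i.e. $H(\rho e^{i\theta})$ is a fixed radial profile times a $2\pi$-periodic function $C(\theta)$; imposing $\Delta H = 0$ in polar coordinates collapses to the ODE $C'' + C = 0$, so $C(\theta) = \alpha e^{i\theta} + \beta e^{-i\theta}$ and $H = \alpha h^\lambda + \beta\,\overline{h^\lambda}$. Restoring the zeroth-order part $a_0\log|z| + b_0$ of $h$ produces precisely~\eqref{var7}; this $H$ has only $n = \pm 1$ modes, so the condition $(n^2-1)U(\rho,h_n) = 0$ is automatically met. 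The converse is a direct check: for $h^\lambda$ (resp. $\overline{h^\lambda}$) one has $\frac{\rho}{\rho^2+\lambda}h^\lambda = \frac{1}{1+\lambda}e^{i\theta}$ and $|h^\lambda_\theta| = |h^\lambda|$, so every term in~\eqref{uc4} vanishes, and by orthogonality $V = |\alpha|^2 U(\cdot,h^\lambda) + |\beta|^2 U(\cdot,\overline{h^\lambda})$, whence $\LL^\lambda[V]\equiv 0$.

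To specialize to $h\in\Ha(\A,\As)$, I would use that $z\mapsto|h(z)|$ extends continuously to $C_1$ with boundary value $1$ (Section~\ref{background}). Evaluating~\eqref{var7} on $|z| = 1$ gives $h(e^{i\theta}) = \alpha e^{i\theta} + \beta e^{-i\theta} + b_0$, and matching Fourier coefficients in $|h(e^{i\theta})|^2\equiv 1$ forces $|\alpha|^2 + |\beta|^2 + |b_0|^2 = 1$, $\beta\bar\alpha = 0$, and $b_0\bar\beta + \bar b_0\alpha = 0$. Since $h$ cannot be constant on $C_1$, exactly one of $\alpha,\beta$ vanishes. The step I expect to be the only genuinely non-formal point is excluding $\alpha = 0\ne\beta$: there $h = \frac{\beta}{1+\lambda}(\bar z + \lambda/z) + a_0\log|z|$ with $|\beta| = 1$, and evaluating the circular mean $\dashint_{C_\rho}J(z,h)$ by orthogonality of power functions yields $\frac{|\beta|^2}{(1+\lambda)^2}(\lambda^2\rho^{-4} - 1) < 0$ for $\rho > 1$, contradicting orientation preservation. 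Hence $\beta = 0$, $b_0 = 0$, $|\alpha| = 1$, which is~\eqref{var8}. The closing remark about $a_0$ is not a separate claim but a caveat: for $|a_0|$ too large the map~\eqref{var8} ceases to be one-to-one on $\A$, so membership in $\Ha(\A,\As)$ is itself the constraint on $a_0$, an issue I would leave qualitative here.
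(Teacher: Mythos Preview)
Your proposal is correct and follows essentially the same route as the paper: apply identity~\eqref{uc4} to $H=h-\dashint_{C_\rho}h$, split into the manifestly nonnegative $\rho$-derivative term and the Fourier sum $\sum_{n\ne 0}(n^2-1)U(\rho,h_n)\ge 0$, then in the equality case solve $C''+C=0$ to obtain~\eqref{var7}, and for $h\in\Ha(\A,\As)$ use $|h|\equiv 1$ on $C_1$ plus the negative circular mean of $J(z,h)$ to eliminate the $\alpha=0\ne\beta$ branch. The only differences are cosmetic (you include an explicit converse verification and note that only the $n=\pm 1$ modes survive), and your Jacobian computation matches the paper's.
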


The reader may wish to notice that the equality
$\LL^{\lambda}[V]=0$ for $h\in\Ha (\A,\As)$ implies $\lim\limits_{\rho \searrow 1}\dashint_{C_\rho}h=0$. Here is yet further reduction of formula~\eqref{var8}
for mappings between round annuli, meaning that $\abs{h(z)}=R_*$ for $\abs{z}=R$.
In this case equality $\LL^{\lambda}[V]\equiv 0$ occurs only for
\begin{equation*}
h(z)=\frac{\alpha}{1+\lambda}\left(z+\frac{\lambda}{\bar z}\right)=\alpha h^{\lambda}(z),\quad \abs{\alpha}=1,
\end{equation*}
which is, up to rotation, the extremal mapping for the Dirichlet energy $\iint_{\A}\norm{Dh}^2$. We have in this case
$\dashint_{C_{\rho}}h=0$ for every $1<\rho<R$.

\section{Proof of Theorems~\ref{thgene2}~\eqref{thgene21}  and~\ref{thnew}}\label{aftervar}

Theorem~\ref{thnew} is a special case of the following result.

\begin{proposition}\label{mean0}
Consider an arbitrary harmonic function $h\colon A(1,R)\to\C$ normalized at the inner circle by the conditions
\begin{equation}\label{aftervar1}
\lim_{\rho\searrow 1}\dashint_{C_{\rho}}h=0\quad \text{and }\quad \lim_{\rho\searrow 1}\dashint_{C_{\rho}}\abs{h}^2=1.
\end{equation}
Suppose that the evolution of circles under $h$ begins with nonnegative speed
\begin{equation}\label{aftervar2}
\lim_{\rho\searrow 1} \frac{d}{d\rho}\left(\dashint_{C_{\rho}}\abs{h}^2\right)^{1/2}=\frac{1-\lambda}{1+\lambda}\ge 0,
\end{equation}
where $-1<\lambda\le 1$. Then
\begin{equation}\label{aftervar3}
\left(\dashint_{C_{s}}\abs{h}^2\right)^{1/2}\ge \frac{s^2+\lambda}{(1+\lambda)s}\ge\frac{1}{2}\left(s+\frac{1}{s}\right)
\end{equation}
for $1<s<R$. The first inequality in~\eqref{aftervar3} turns into  equality if and only if $h=\alpha h^\lambda$  with $\abs{\alpha}=1$.
\end{proposition}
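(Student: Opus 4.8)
The plan is to compare the quadratic mean $U=U(\rho,h)$ with that of the extremal map $h^\lambda$ through the fact that the variance $V=V(\rho,h)$ is a subsolution to $\LL^\lambda$ (Proposition~\ref{varprop1}), the parameter $\lambda$ being forced on us precisely so that $V$ and the explicit solution of $\LL^\lambda[\,\cdot\,]=0$ agree to first order on the inner circle.

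First I would extract the two consequences of the normalization~\eqref{aftervar1}. Writing $h=\sum_n h_n$ as in~\eqref{hfeq1}, the condition $\lim_{\rho\searrow1}\dashint_{C_\rho}h=0$ means $b_0=0$, so by~\eqref{cm11}--\eqref{cm12}
\[
U(\rho)=V(\rho)+\abs{a_0}^2(\log\rho)^2\ge V(\rho),\qquad 1<\rho<R,
\]
while the second normalization gives $U(1)=V(1)=1$. Since $\sqrt U\to1$ as $\rho\searrow1$, the initial‑speed hypothesis~\eqref{aftervar2} is the same as $\dot U(1)=\tfrac{2(1-\lambda)}{1+\lambda}$, and then $\dot V(1)=\dot U(1)=\tfrac{2(1-\lambda)}{1+\lambda}$ because the correction term $\abs{a_0}^2(\log\rho)^2$ has derivative $0$ at $\rho=1$. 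Now put $W(\rho):=U(\rho,h^\lambda)=\dfrac{(\rho^2+\lambda)^2}{(1+\lambda)^2\rho^2}$; as $\rho^2+\lambda>0$ on $[1,R)$ (here $\lambda>-1$ is used), a one‑line check in the divergence form~\eqref{uc2} gives $\LL^\lambda[W]\equiv0$, together with $W(1)=1$ and $\dot W(1)=\tfrac{2(1-\lambda)}{1+\lambda}$. Thus $V$ and $W$ have the same value and the same $\rho$‑derivative at $\rho=1$ — this is exactly the role of tying $\lambda$ to the initial speed.

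The heart of the matter is then a maximum principle. By Proposition~\ref{varprop1}, $\LL^\lambda[V]\ge0$, so $\phi:=V-W$ satisfies $\LL^\lambda[\phi]\ge0$ on $(1,R)$ with $\phi(1)=\dot\phi(1)=0$. Reading this off the divergence form~\eqref{uc2}: the function $\rho^3\frac{d}{d\rho}\bigl(\phi/(\rho^2+\lambda)\bigr)$ is nondecreasing on $(1,R)$ and vanishes at $\rho=1$, hence is $\ge0$; therefore $\phi/(\rho^2+\lambda)$ is nondecreasing and vanishes at $\rho=1$, hence is $\ge0$. Thus $U\ge V\ge W$, which is the first inequality in~\eqref{aftervar3}; the second one is elementary, equivalent after clearing denominators to $(1-\lambda)(s^2-1)\ge0$, true because $\lambda\le1$ and $s>1$.

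For the equality statement, suppose the first inequality in~\eqref{aftervar3} becomes an equality at some $s_0\in(1,R)$. Then $U(s_0)=V(s_0)=W(s_0)$; the identity $U-V=\abs{a_0}^2(\log\rho)^2$ forces $a_0=0$, so $U\equiv V$, and $\phi(s_0)=0$ combined with the monotonicity above forces $\phi\equiv0$ on $[1,s_0]$, i.e. $\LL^\lambda[V]\equiv0$ there. Since the computation behind Proposition~\ref{varprop1} localizes and $h$ is real‑analytic, this already gives $h(z)=\alpha h^\lambda(z)+\beta\,\overline{h^\lambda(z)}$ with $a_0=b_0=0$, and then $\lim_{\rho\searrow1}\dashint_{C_\rho}\abs{h}^2=1$ yields $\abs{\alpha}^2+\abs{\beta}^2=1$; when $h$ is moreover a homeomorphism onto a domain with inner boundary $C_1$ (as in Theorem~\ref{thnew}), the pointwise normalization $\abs{h(e^{i\theta})}\equiv1$ forces in addition $\alpha\bar\beta=0$, hence $\beta=0$ and $h=\alpha h^\lambda$ with $\abs{\alpha}=1$; conversely $\alpha h^\lambda$ realizes equality for every $s$. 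I expect the main obstacle to be precisely this last step — identifying and discarding the anti‑holomorphic term $\beta\,\overline{h^\lambda}$ — since once the zeroth‑ and first‑order data of $V$ and $W$ are matched at $\rho=1$, the comparison inequality itself is automatic.
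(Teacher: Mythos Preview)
Your argument is correct and follows essentially the same route as the paper: both reduce to $\LL^\lambda[V]\ge 0$ in divergence form, match the initial data $V(1)=1$, $\dot V(1)=2(1-\lambda)/(1+\lambda)$, and integrate twice (you package this as a comparison $\phi=V-W\ge 0$, the paper computes the constant $C=-2\lambda/(1+\lambda)^2$ and integrates directly---these are the same computation). Your treatment of the equality case is in fact more careful than the paper's: for an \emph{arbitrary} harmonic $h$ the equality analysis only yields $h=\alpha h^\lambda+\beta\,\overline{h^\lambda}$ with $\abs{\alpha}^2+\abs{\beta}^2=1$, and you correctly flag that eliminating $\beta$ requires the extra structure of $h\in\Ha(\A,\As)$ (one also needs orientation to rule out $\alpha=0$, $\beta\ne 0$, as in Case~1 of Proposition~\ref{varprop1}).
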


\begin{proof}  Let us examine the variance
\begin{equation*}
V=V(\rho)=\dashint_{C_{\rho}}\abs{h}^2-\left|\dashint_{C_{\rho}}h\right|^2.
\end{equation*}
A key step in obtaining~\eqref{aftervar3} is the inequality from Proposition~\ref{varprop1},
\begin{equation}\label{u0}
0\le \LL^{\lambda}[V]=\frac{\rho^2+\lambda}{\rho^3}\,\frac{d}{d\rho}\left[\rho^3\frac{d}{d\rho}\left(\frac{V}{\rho^2+\lambda}\right) \right]
\end{equation}
which tells us that the function $\rho\mapsto \rho^3\frac{d}{d\rho}\left(\frac{V}{\rho^2+\lambda}\right)$ is nondecreasing. An obvious consequence
of it is that
\begin{equation}\label{aftervar4}
\frac{d}{d\rho}\left(\frac{V}{\rho^2+\lambda}\right)\ge\frac{C}{\rho^3},\quad 1\le\rho<R,
\end{equation}
where the constant $C$ is given by
\begin{equation}\label{aftervar5}
C=\frac{d}{d\rho}\left(\frac{V}{\rho^2+\lambda}\right)\bigg|_{\rho=1}=\frac{\dot V(1)}{1+\lambda}-\frac{2V(1)}{(1+\lambda)^2}.
\end{equation}
We express $C$ in terms of the initial speed. Using the normalization conditions in~\eqref{aftervar1} gives $V(1)=1$.
More generally,
\begin{equation*}
V(\rho)=\dashint_{C_{\rho}}\abs{h}^2-\left|\dashint_{C_{\rho}}h\right|^2=\dashint_{C_{\rho}}\abs{h}^2-\abs{a_0}^2\log^2\rho,
\end{equation*}
where we employed the orthogonal decomposition~\eqref{hfeq1} with $b_0=\dashint_{C_1}h=0$. Differentiation yields
\begin{equation*}
\dot V(1)=\dot{U}(1)   %=\frac{d}{d\rho}\left(\dashint_{C_{\rho}}\abs{h}^2\right)\bigg|_{\rho=1}
=2\frac{1-\lambda}{1+\lambda}.
\end{equation*}
Substitute these values of $V(1)$ and $\dot V(1)$ into~\eqref{aftervar5} to find the constant $C$
\begin{equation}\label{aftervar6}
C=-\frac{2\lambda}{(1+\lambda)^2}.
\end{equation}
Then inequality~\eqref{aftervar4} takes the explicit form
\begin{equation}\label{u1}
\frac{d}{d\rho}\left(\frac{V}{\rho^2+\lambda}\right)\ge -\frac{2\lambda}{(1+\lambda)^3}\frac{1}{\rho^3}.
\end{equation}
We integrate it over the interval $1<\rho<s$ to obtain
\begin{equation}\label{u2}
V(s)\ge \frac{(s^2+\lambda)^2}{(1+\lambda)^2s^2}
\end{equation}
which yields the desired inequality,
\begin{equation}\label{blah}
 \dashint_{C_s}\abs{h}^2=V(s)+\left|\dashint_{C_s}h\right|^2\ge \left[\frac{s^2+\lambda}{(1+\lambda)s}\right]^2
 \end{equation}
Now suppose that the equality occurs in~\eqref{blah}. Then  $\LL^\lambda [V] \equiv 0$ and $\dashint_{C_s}h=0$. The latter together with~\eqref{aftervar1} yield $a_0=b_0=0$. The equality case of Proposition~\ref{varprop1} implies $h=\alpha h^\lambda$ where $\abs{\alpha}=1$.
\end{proof}

\begin{proof}[Proof of Theorem~\ref{thgene2}~\eqref{thgene21}]
Choose $\lambda \in (-1,1]$ so that $\dot{U}(1)= \frac{1-\lambda}{1+\lambda}$, where $\dot{U} (1)$ is the initial speed of harmonic evolution, defined by~\eqref{uprime}. Theorem~\ref{thnew} yields
\[
R_\ast (h) \ge  \frac{R^2+\lambda}{(1+\lambda)R} \ge  \frac{1}{2} \left(R+ \frac{1}{R} \right).
\]
The latter inequality turns into identity when  $\lambda=1$; that is, for free harmonic evolutions.
\end{proof}

\begin{remark}\label{rmuni1}
Suppose $h\in \Ha_D(\A, \A^\ast)$ with $R_\ast= \frac{1}{2} \left(R+\frac{1}{R}\right)$.  Then
\[U(R) \le R_*^2= \left(\frac{R^2+1}{2R}\right)^2.\]
In view of~\eqref{blah} this is only possible if $\lambda=1$ and equality holds in~\eqref{u1} for all $\rho \in (1,R)$.  Therefore, \eqref{u0} also turns into identity for all $\rho \in (1,R)$.  As  in the proof of Theorem~\ref{thnew}, the equality case of Proposition~\ref{varprop1} implies
\begin{equation}\label{equni1}
\Ha_D(\A, \A^\ast) = \{\alpha h^1 \colon \abs{\alpha}=1\}.
\end{equation}
\end{remark}

The case $\lambda=0$ of Theorem~\ref{thnew} also gains in interest if we combine it   with Lemma~\ref{uclem2}. We obtain a refinement of Schottky's theorem.

\begin{corollary}\label{Schottky}
Let $h\colon A(1,R)\to\As$ be a conformal mapping such that
\begin{equation}\label{aftervar7}
\lim_{\rho\searrow 1}\dashint_{C_\rho}h=0.
\end{equation}
Then
\begin{equation}\label{aftervar8}
R_*(h):=\lim_{\rho\nearrow R}\left(\dashint_{C_{\rho}}\abs{h}^2\right)^{1/2}\ge R.
\end{equation}
Also,
the area of the target annulus $\As$ is not smaller than that of the domain $\A=A(1,R)$.
\end{corollary}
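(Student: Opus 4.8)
The plan is to recognize that a conformal homeomorphism $h\in\Ha(\A,\As)$ is precisely the $\lambda=0$ instance of the machinery already in place, so that both assertions drop out of Lemma~\ref{uclem2} and Proposition~\ref{mean0}.

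For the bound \eqref{aftervar8}: a conformal map is harmonic, hence $h\in\Ha(\A,\As)$; since the inner boundary of $\As$ is the unit circle, $\abs{h(z)}\to1$ as $z\to C_1$, so $\lim_{\rho\searrow1}\dashint_{C_\rho}\abs{h}^2=1$, which together with the hypothesis \eqref{aftervar7} gives the normalization \eqref{aftervar1}. By Lemma~\ref{uclem2} the evolution of circles generated by $h$ begins with unit speed, so \eqref{aftervar2} holds with $\frac{1-\lambda}{1+\lambda}=1$, i.e. $\lambda=0$. Proposition~\ref{mean0} then yields $\big(\dashint_{C_s}\abs{h}^2\big)^{1/2}\ge s$ for $1<s<R$, and letting $s\nearrow R$ gives $R_*(h)\ge R$.

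For the area statement, I would use that $h$, being single-valued and holomorphic on the annulus, has a Laurent expansion with no $\log\abs{z}$-term (so $a_0=0$ in \eqref{hfeq1}); the hypothesis \eqref{aftervar7} then also kills the constant term, leaving $h(z)=\sum_{k\ne0}c_kz^k$, so that $\dashint_{C_\rho}h\equiv0$ and, by orthogonality of monomials over circles, $U(\rho)=\sum_{k\ne0}\abs{c_k}^2\rho^{2k}$. Since $h_{\bar z}\equiv0$ we have $J(z,h)=\abs{h_z}^2$ with $h_z=\sum_{k\ne0}kc_kz^{k-1}$, and the same orthogonality gives
\[
\operatorname{Area}(\As)=\iint_{\A}J(z,h)=\pi\sum_{k\ne0}k\,\abs{c_k}^2\,(R^{2k}-1).
\]
Because $k(R^{2k}-1)\ge R^{2k}-1$ for every nonzero integer $k$ (for $k\ge1$ this is immediate; for $k\le-1$ the left-hand side is positive while the right-hand side is negative), it follows that
\[
\operatorname{Area}(\As)\ge\pi\sum_{k\ne0}\abs{c_k}^2(R^{2k}-1)=\pi\big(U(R)-U(1)\big)=\pi\big(R_*(h)^2-1\big)\ge\pi(R^2-1)=\operatorname{Area}(\A),
\]
the last inequality being exactly the bound just proved.

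There is no serious obstacle here: the whole content is that conformal evolutions sit at $\lambda=0$. The only points needing a word of care are the vanishing of the $\log$-component for a genuinely conformal (single-valued) map and the elementary termwise comparison in the area computation; alternatively one can bypass the series altogether by noting, for conformal $h$, that the region bounded by $h(C_\rho)$ has area $\tfrac{\pi\rho}{2}\dot U(\rho)$ and that $\dot U(\rho)\ge2\rho$ is the $\lambda=0$ case of the convexity inequality \eqref{u1}.
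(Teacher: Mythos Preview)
Your argument is correct and matches the paper's proof essentially line for line: the bound $R_*(h)\ge R$ is obtained by combining Lemma~\ref{uclem2} with Proposition~\ref{mean0} at $\lambda=0$, and the area comparison is done via the Laurent expansion, the orthogonality of powers on circles, and the elementary inequality $k(R^{2k}-1)\ge R^{2k}-1$ for nonzero integers~$k$. Your closing alternative (area via $\tfrac{\pi\rho}{2}\dot U(\rho)$ and $\dot U(\rho)\ge 2\rho$) is a pleasant extra observation not in the paper, though it would need a word about the limit as $\rho\nearrow R$.
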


\begin{proof} Inequality~\eqref{aftervar8} is obtained from Proposition~\ref{mean0} be setting $\lambda=0$.
To estimate the area of $\As$, we consider the Laurent expansion of $h$ around zero,
\begin{equation*}
h(z)=\sum_{n=-\infty}^{\infty}a_n z^n,\quad\text{where }\ a_0=\lim_{\rho\searrow 1}\dashint_{C_\rho}h=0.
\end{equation*}
We explore the orthogonality of the powers of $z$ to compute
\begin{equation*}
R_*^2(h)-1=\lim_{\rho\nearrow R}\dashint_{C_{\rho}}\abs{h}^2-\lim_{\rho\searrow 1}\dashint_{C_{\rho}}\abs{h}^2
=\sum_{n\ne 0}\abs{a_n}^2(R^{2n}-1),
\end{equation*}
where the sum is taken to be $\infty$ when the series diverges. Hence, by~\eqref{aftervar8},
\begin{equation}\label{add1}
\sum_{n\ne 0}\abs{a_n}^2(R^{2n}-1)\ge R^2-1.
\end{equation}
On the other hand, the area of $\As$ is equal to
\begin{equation*}\begin{split}
\iint_{\A}\abs{h'(z)}^2&=\pi\sum_{n\ne 0}n\abs{a_n}^2(R^{2n}-1)  \\
&\ge \pi \sum_{n\ne 0}\abs{a_n}^2(R^{2n}-1),
\end{split}\end{equation*}
which is greater than or equal to the area of $\A$, by virtue of~\eqref{add1}.
Here we used the inequality $n(R^{2n}-1)\ge (R^{2n}-1)$, which is valid for all integers $n\ne 0$.
\end{proof}

\begin{remark}
The observant reader may notice notice that $\LL^\lambda [U] \ge 0$ for the quadratic mean $U=U(\rho, h)$ of a polar mapping; that is, $\abs{h(z)}= \const$ on every circle $C_\rho$. Indeed, \eqref{uc4} and H\"older's inequality imply
\[\LL^\lambda [U] \ge  \frac{2}{\rho^2}\,\dashint_{C_{\rho}}\left(\abs{h_{\theta}}^2-\abs{h}^2 \right) \ge  \frac{2}{\rho^2} \left[  \left( \dashint_{C_{\rho}} \abs{h_{\theta}}\right)^2- \dashint_{C_{\rho}}  \abs{h}^2
\right]=0. \]
Here $\frac{1}{\rho} \int_{C_\rho} \abs{h_\theta}$ represents the length of the curve $C^\ast_\rho = h(C_\rho)$ which equals $2 \pi h(\rho)$. The Nitsche bound for polar mappings then follows.

\end{remark}

\section{Proof of Theorem~\ref{thgene1}}
%We may and do assume that $h$ is continuous up to the outer boundary of $\A$; the  general case follows by restricting %$h$ to an annulus $A(1,R')$ and letting $R' \nearrow R$.
\subsection{The case $1<R\le e$}\label{secnbproof}

We will prove the following, more precise statement: if for some $-1<\lambda\le 1$
\begin{equation}\label{ass1}
\dot{U}(1)\ge 2\frac{1-\lambda}{1+\lambda}
\end{equation}
and
\begin{equation}\label{ass2}
R^2-1-(R^2-\lambda)\log R\ge 0,
\end{equation}
then
\begin{equation}\label{Re5}
R_*(h)\ge \frac{R^2+\lambda}{(1+\lambda)R}.
\end{equation}
Theorem~\ref{thgene1} for $1<R\le e$ follows by choosing $\lambda=1$ above.
We may assume that $R_*(h)<\infty$, otherwise~\eqref{Re5} is vacuous.

In this proof we examine $\LL^{\lambda}[U]$ for the quadratic mean $U(\rho)=\dashint_{C_{\rho}}\abs{h}^2$. Examples show that for a general harmonic mapping $h\in \Ha (\A, \As)$, the operator
$\LL^{\lambda}[U]$ need not be nonnegative pointwise in the entire interval $(1,R)$. But it does in an average sense,
when the domain annulus $\A=A(1,R)$ is not too wide. We shall integrate $\LL^{\lambda}[U]$ against a carefully adopted weight
in the interval $(1,R)$. It is important that such a weighted average of $\LL^{\lambda}[U]$ will depend only on $U(1)$, $\dot{U}(1)$ and $U(R)$, which we defined in Section~\ref{seccm}. The following integral is the key ingredient,
\begin{equation}\label{Re1}
\K^{\lambda}[U]:=\int_1^R\frac{\rho(R^2-\rho^2)}{\rho^2+\lambda}\LL^{\lambda}[U]\,d\rho.
\end{equation}
This, possibly improper, integral has a well-defined value in $(-\infty,+\infty]$ because
\[
\LL^{\lambda}[U]=\LL^{\lambda}[V]+\LL^{\lambda}[U_0]
\]
where $\LL^{\lambda}[V]\ge 0$ by Proposition~\ref{varprop1} and $\LL^{\lambda}[U_0]$ is bounded
on $[1,R]$. We integrate~\eqref{Re1} by parts
using the divergence form~\eqref{uc2} of $\LL^{\lambda}[U]$ to obtain the identity
\begin{equation}\begin{split}\label{Re2}
\K^{\lambda}[U]&=\int_1^R\frac{R^2-\rho^2}{\rho^2}\frac{d}{d\rho}\left[\rho^3\frac{d}{d\rho}\left(\frac{U}{\rho^2+\lambda}\right)\right]\,d\rho
\\
&=\frac{2R^2}{R^2+\lambda}U(R)-2\frac{\lambda R^2+1}{(1+\lambda)^2}U(1)-\frac{R^2-1}{1+\lambda}\dot U(1).
\end{split}\end{equation}
Since $U(1)=1$,~\eqref{Re2} takes the form
\begin{equation}\begin{split}\label{Re4}
\K^{\lambda}[U]&=\frac{2R^2}{R^2+\lambda}\left\{U(R)-\left[\frac{R^2+\lambda}{(1+\lambda)R}\right]^2\right\} \\
&-\frac{R^2-1}{1+\lambda}\left(\dot U(1)-2\frac{1-\lambda}{1+\lambda}\right).
\end{split}\end{equation}
The desired bound~\eqref{Re5} will follow once we know that
\begin{equation}\label{Re5prime}
\K^{\lambda}[U]\ge 0.
\end{equation}
Before proving~\eqref{Re5prime}, observe that equality occurs for the mapping
\begin{equation}\label{Re7}
h^{\lambda}(z)=\frac{1}{1+\lambda}\left(z+\frac{\lambda}{\bar z}\right)
=\left(\frac{\rho}{1+\lambda}+\frac{\lambda}{(1+\lambda)\rho}\right)e^{i\theta}
\end{equation}
because $\LL^{\lambda}[U(\rho,h^{\lambda})]\equiv 0$, see Section~\ref{secuc}.
This suggests writing our mapping $h$ in the form
\begin{equation}\label{Re8}
h(\rho e^{i\theta})=h^{\lambda}(\rho e^{i\theta}) g(\rho e^{i\theta}),\quad 1<\rho<R.
\end{equation}
We need a lemma.

\begin{lemma}\label{windh}
Let $h\in\Ha(\A,\As)$, then
\begin{equation}\label{Re9a}
\lim_{\rho\searrow 1}\dashint_{C_{\rho}}\im \bar hh_{\theta}=1,
\end{equation}
hence
\begin{equation}\label{Re9b}
\lim_{\rho\searrow 1}\dashint_{C_{\rho}}\im \bar gg_{\theta}=0.
\end{equation}
\end{lemma}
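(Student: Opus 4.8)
The plan is to prove \eqref{Re9a} first and then read off \eqref{Re9b} from the factorization $h=h^{\lambda}g$. I would dispose of the implication $\eqref{Re9a}\Rightarrow\eqref{Re9b}$ at once. By \eqref{Re7} we have $h^{\lambda}(\rho e^{i\theta})=\mu(\rho)e^{i\theta}$ with $\mu(\rho)=\frac{\rho^{2}+\lambda}{(1+\lambda)\rho}$ real-valued, so on each circle $C_{\rho}$ one has $\overline{h^{\lambda}}\,h^{\lambda}_{\theta}=i\mu(\rho)^{2}$ and $\abs{h^{\lambda}}^{2}=\mu(\rho)^{2}$. Inserting $h=h^{\lambda}g$ into $\bar h h_{\theta}$ gives, pointwise on $C_{\rho}$,
\[
\im(\bar h h_{\theta})=\abs{g}^{2}\,\im\!\bigl(\overline{h^{\lambda}}\,h^{\lambda}_{\theta}\bigr)+\abs{h^{\lambda}}^{2}\,\im(\bar g g_{\theta})=\mu(\rho)^{2}\bigl(\abs{g}^{2}+\im(\bar g g_{\theta})\bigr),
\]
and since $\abs{g}=\abs{h}/\mu(\rho)$ on $C_{\rho}$, averaging yields $\dashint_{C_{\rho}}\im(\bar g g_{\theta})=\mu(\rho)^{-2}\bigl(\dashint_{C_{\rho}}\im(\bar h h_{\theta})-\dashint_{C_{\rho}}\abs{h}^{2}\bigr)$. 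Letting $\rho\searrow1$ and using $\mu(\rho)\to1$, $\dashint_{C_{\rho}}\abs{h}^{2}\to1$, and $\dashint_{C_{\rho}}\im(\bar h h_{\theta})\to1$ (this is \eqref{Re9a}) gives \eqref{Re9b}.

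For \eqref{Re9a} I would start from the elementary identity $\im(\bar h h_{\theta})=\abs{h}^{2}\,\partial_{\theta}\!\arg h$ on $C_{\rho}$, valid since $\abs{h}>1$ in $\A$. This splits the average as $\dashint_{C_{\rho}}\im(\bar h h_{\theta})=w(\rho)+\dashint_{C_{\rho}}\bigl(\abs{h}^{2}-1\bigr)\partial_{\theta}\!\arg h$, where $w(\rho):=\dashint_{C_{\rho}}\partial_{\theta}\!\arg h$ equals $\frac{1}{2\pi}$ times the increment of $\arg h$ along the Jordan curve $C_{\rho}^{\ast}=h(C_{\rho})$, i.e.\ the winding number of $C_{\rho}^{\ast}$ about $0$. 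Since $h\in\Ho(\A,\As)$ is orientation preserving, sends $C_{1}$ to the inner boundary of $\As$, and $0$ lies in the bounded component enclosed by $C_{\rho}^{\ast}$, we get $w(\rho)\equiv1$ (the same winding number that appears in the normal-family argument of the Introduction). It remains to show that the remainder tends to $0$. By Cauchy--Schwarz, together with $(\partial_{\theta}\!\arg h)^{2}\le\abs{h_{\theta}}^{2}/\abs{h}^{2}$ and $\dashint_{C_{\rho}}(\abs{h}^{2}-1)^{2}\le\norm{\abs{h}^{2}-1}_{L^{\infty}(C_{\rho})}(U(\rho)-1)$ (using $\abs{h}>1$),
\[
\Bigl|\dashint_{C_{\rho}}\bigl(\abs{h}^{2}-1\bigr)\partial_{\theta}\!\arg h\Bigr|^{2}\le\frac{\norm{\abs{h}^{2}-1}_{L^{\infty}(C_{\rho})}\,\bigl(U(\rho)-1\bigr)}{\min_{C_{\rho}}\abs{h}^{2}}\;\dashint_{C_{\rho}}\abs{h_{\theta}}^{2}.
\]

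Now I would run through the four factors as $\rho\searrow1$: $\norm{\abs{h}^{2}-1}_{L^{\infty}(C_{\rho})}\to0$ because $\abs{h}$ extends continuously to $C_{1}$ with value $1$; $\min_{C_{\rho}}\abs{h}^{2}\to1$; $U(\rho)-1=O(\rho-1)$ since $\dot U$ extends continuously to $[1,R)$ by \eqref{uprime}; and $\dashint_{C_{\rho}}\abs{h_{\theta}}^{2}=O\bigl((\rho-1)^{-1}\bigr)$. The last is the only substantial point. Using the orthogonal decomposition, $\dashint_{C_{\rho}}\abs{h_{\theta}}^{2}=\sum_{n}n^{2}\abs{a_{n}\rho^{n}+b_{n}\rho^{-n}}^{2}\le2\sum_{n}n^{2}\bigl(\abs{a_{n}}^{2}\rho^{2n}+\abs{b_{n}}^{2}\rho^{-2n}\bigr)$; the families $\{a_{n}\}_{n>0}$ and $\{b_{n}\}_{n<0}$ decay geometrically (the corresponding holomorphic resp.\ anti-holomorphic summands of $h$ converge on $\{\abs{z}<R\}$), so $\sum_{n>0}n^{2}\abs{a_{n}}^{2}\rho^{2n}+\sum_{n<0}n^{2}\abs{b_{n}}^{2}\rho^{-2n}$ stays bounded for $1\le\rho\le\rho_{1}<R$; whereas finiteness of $\iint_{A(1,\rho_{1})}\norm{Dh}^{2}$ — the Proposition on the quadratic means in Section~\ref{seccm} — is exactly $\sum_{n<0}\abs{n}\abs{a_{n}}^{2}<\infty$ and $\sum_{n>0}n\abs{b_{n}}^{2}<\infty$, so
\[
\sum_{n<0}n^{2}\abs{a_{n}}^{2}\rho^{2n}+\sum_{n>0}n^{2}\abs{b_{n}}^{2}\rho^{-2n}\le\Bigl(\sup_{k\ge1}k\,\rho^{-2k}\Bigr)\Bigl(\sum_{n<0}\abs{n}\abs{a_{n}}^{2}+\sum_{n>0}n\abs{b_{n}}^{2}\Bigr)=O\bigl((\rho-1)^{-1}\bigr).
\]
Multiplying the four factors gives $o(1)\cdot O(\rho-1)\cdot O(1)\cdot O\bigl((\rho-1)^{-1}\bigr)=o(1)$, hence \eqref{Re9a}.

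The genuinely delicate step is $\dashint_{C_{\rho}}\abs{h_{\theta}}^{2}=O\bigl((\rho-1)^{-1}\bigr)$: one must turn the qualitative ``finite Dirichlet energy near $C_{1}$'' into this quantitative blow-up rate for the tangential energy on shrinking circles, and the crude interior gradient estimate for harmonic functions only yields $O\bigl((\rho-1)^{-2}\bigr)$, which is useless here — the gain comes from the orthogonal decomposition and the precise form of the Dirichlet integral. Everything else (the reduction of \eqref{Re9b} to \eqref{Re9a}, the identification $w(\rho)\equiv1$, and the Cauchy--Schwarz bound) is routine.
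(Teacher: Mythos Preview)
Your argument is correct. The derivation of \eqref{Re9b} from \eqref{Re9a} is essentially identical to the paper's (both expand $\bar g g_\theta$ in terms of $\bar h h_\theta$ using $h=h^\lambda g$ and $\abs{h^\lambda}=\mu(\rho)$). The real difference is in \eqref{Re9a}.

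For \eqref{Re9a} the paper gives a one-line geometric argument: the Green/area formula yields
\[
\pi\dashint_{C_\rho}\im(\bar h h_\theta)=\frac{1}{2i}\int_0^{2\pi}\bar h\,h_\theta\,d\theta=\abs{\Omega_\rho},
\]
the area of the Jordan domain $\Omega_\rho$ bounded by $C_\rho^\ast=h(C_\rho)$. Since $\abs{h}>1$ on $\A$ and $\abs{h}\to1$ uniformly on $C_\rho$ as $\rho\searrow1$, the region $\Omega_\rho\setminus\overline{\mathbb D}$ shrinks to zero area, so $\abs{\Omega_\rho}\to\pi$ and \eqref{Re9a} follows immediately. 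No derivative estimates are needed.

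Your route instead writes $\im(\bar h h_\theta)=\abs{h}^2\partial_\theta\arg h$, peels off the winding number $w(\rho)\equiv1$, and controls the remainder $\dashint_{C_\rho}(\abs{h}^2-1)\partial_\theta\arg h$ by Cauchy--Schwarz together with the tangential-energy bound $\dashint_{C_\rho}\abs{h_\theta}^2=O\bigl((\rho-1)^{-1}\bigr)$. That last bound is genuinely interesting: you extract it from the orthogonal decomposition and the finiteness of $\iint_{A(1,\rho_1)}\norm{Dh}^2$ established in Section~\ref{seccm}, via $\sup_{k\ge1}k\rho^{-2k}=O\bigl((\log\rho)^{-1}\bigr)$. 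All of this is valid, and the quantitative estimate on the circular tangential energy could be of independent use. But for the lemma itself it is overkill compared with the paper's area argument, which sidesteps any control on $h_\theta$ entirely.
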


\begin{proof}
To prove~\eqref{Re9a}
we compute the area of a domain $\Omega_\rho$ inside the Jordan curve $C_\rho^\ast= h(C_\rho)$ as follows.
\[\abs{\Omega_\rho} = \frac{1}{2i} \int_{C_\rho} \bar h \, \dtext h = \frac{1}{2i} \int_{0}^{2\pi}  \bar h h_\theta\, \dtext \theta = \pi \dashint_{C_\rho} \im \bar h h_\theta . \]
On other hand  $\abs{\Omega_\rho}-\pi$ is the area of the region enclosed between $C_1$ and $C_\rho^\ast$, which converges to $0$ as $\rho \searrow 1$.

To prove~\eqref{Re9b} we differentiate   the mapping $g=h/h^{\lambda}$ and find that
\begin{equation*}
\bar g g_{\theta}=\abs{h^{\lambda}}^{-2}\left[\bar h h_{\theta}-\abs{h}^2\frac{h_{\theta}^{\lambda}}{h^{\lambda}}\right].
\end{equation*}
This implies
\begin{equation*}
\begin{split}
\lim_{\rho\searrow 1} \dashint_{C_{\rho}} \im \bar g g_{\theta} &=
\lim_{\rho\searrow 1} \frac{(1+\lambda)^2\rho^2}{(\rho^2+\lambda)^2}\left[\dashint_{C_{\rho}}\im \bar h h_{\theta}
-\dashint_{C_{\rho}}\abs{h}^2\im\frac{h_{\theta}^{\lambda}}{h^{\lambda}}\right] \\
&=\lim_{\rho\searrow 1}\dashint_{C_{\rho}}\im \bar h h_{\theta}-\dashint_{C_1}\im\frac{h_{\theta}^{\lambda}}{h^{\lambda}}
=1-1=0. \qedhere
\end{split} \end{equation*}
\end{proof}

The same circular means, $\dashint_{C_{\rho}}\bar g g_{\theta}$, link us with the Jacobian determinant
\begin{equation*}
J(z,g)=\frac{1}{\rho}\im \bar g_{\rho} g_{\theta}=\abs{g_z}^2-\abs{g_{\bar z}}^2.
\end{equation*}
Indeed, we differentiate with respect to $\rho$ and integrate by parts along the circle $C_{\rho}$ to obtain
\begin{equation*}\begin{split}
\frac{d}{d\rho}\dashint_{C_{\rho}}\bar g g_{\theta}&= \dashint_{C_{\rho}} (\bar g_{\rho}g_{\theta}+ \bar g g_{\theta\theta})
=\dashint_{C_{\rho}}(\bar g_{\rho}g_{\theta}-\bar g_{\theta}g_{\rho}) \\
&=2i\dashint_{C_{\rho}}\im \bar g_{\rho} g_{\theta}.
\end{split}\end{equation*}
Hence the formula
\begin{equation}\label{Re10}
\frac{d}{d\rho}\dashint_{C_{\rho}}\im \bar g g_{\theta}=\frac{1}{\pi}\int_{C_{\rho}}(\abs{g_z}^2-\abs{g_{\bar z}}^2).
\end{equation}
We now take advantage of formula~\eqref{uc4} for the operator $\LL^{\lambda}$,
\begin{equation}\label{Re11}
\LL^{\lambda}[U]=\frac{2}{\rho^2}\dashint_{C_{\rho}}\left\{\abs{h_{\theta}}^2-\abs{h}
+(\rho^2+\lambda)^2\left|\frac{d}{d\rho}\left(\frac{\rho h}{\rho^2+\lambda}\right)\right|^2\right\}.
\end{equation}
In order to express $\LL^{\lambda}[U]$ by means of $g$ we compute the terms under the integral sign,
\begin{equation*}\begin{split}
\abs{h}^2&=\frac{(\rho^2+\lambda)^2}{(1+\lambda)^2\rho^2}\abs{g}^2; \\
\abs{h_{\theta}}^2&=\frac{(\rho^2+\lambda)^2}{(1+\lambda)^2\rho^2}\abs{g_{\theta}+ig}^2; \\
\frac{d}{d\rho}\left(\frac{\rho h}{\rho^2+\lambda}\right)&=\frac{e^{i\theta}}{1+\lambda}g_{\rho}.
\end{split}\end{equation*}
Therefore,
\begin{equation*}\begin{split}
\LL^{\lambda}[U]&=\frac{2}{\rho^4}\frac{(\rho^2+\lambda)^2}{(1+\lambda)^2}
\dashint_{C_{\rho}}\left(\abs{g_{\theta}+ig}^2-\abs{g}^2+\rho^2\abs{g_{\rho}}^2\right) \\
&=\frac{2(\rho^2+\lambda)^2}{(1+\lambda)^2\rho^2}
\dashint_{C_{\rho}}\left(\abs{g_{\rho}}^2+\rho^{-2}\abs{g_{\theta}}^2+2\rho^{-2}\im (\bar g g_{\theta})\right) \\
&=\frac{4(\rho^2+\lambda)^2}{(1+\lambda)^2\rho^2}
\dashint_{C_{\rho}}\left(\abs{g_z}^2+\abs{g_{\bar z}}^2+\rho^{-2}\im (\bar g g_{\theta})\right).
\end{split}\end{equation*}
Substitute this into~\eqref{aftervar1} to obtain
\begin{equation}\label{Re12}
\K^{\lambda}[U]=I+II,
\end{equation}
where
\begin{equation*}\begin{split}
I&=\frac{4}{(1+\lambda)^2}\int_1^R\frac{(R^2-\rho^2)(\rho^2+\lambda)}{\rho}\dashint_{C_{\rho}}\left(\abs{g_z}^2+\abs{g_{\bar z}}^2\right);
\\
II&=\frac{4}{(1+\lambda)^2}\int_1^R\frac{(R^2-\rho^2)(\rho^2+\lambda)}{\rho^3}\dashint_{C_{\rho}}\im(\bar g g_{\theta}).
\end{split}\end{equation*}
By Fubini's theorem $I$ takes the form of a double integral
\begin{equation*}
I=\frac{4}{(1+\lambda)^2\pi}\iint_{\A}\frac{(R^2-\rho^2)(\rho^2+\lambda)}{2\rho^2}\left(\abs{g_z}^2+\abs{g_{\bar z}}^2\right).
\end{equation*}
Before converting $II$ into double integral we shall first integrate by parts. For this we express the factor in front of the circular mean
as
\begin{equation}\label{Re13}
\frac{(R^2-\rho^2)(\rho^2+\lambda)}{\rho^3}=-\frac{d}{d\rho}\left[(R^2-\lambda)\log\frac{R}{\rho}-\frac{(R^2-\rho^2)(\rho^2-\lambda)}{2\rho^2}\right].
\end{equation}
The expression in the square brackets vanishes at the endpoint $\rho=R$, whereas
$\lim_{\rho\searrow 1} \dashint_{C_{\rho}} \im \bar g g_{\theta}=0$
by~\eqref{Re9b}. Therefore, integration by parts will not produce the endpoint terms. In view of formula~\eqref{Re10} we obtain
\begin{equation*}
II=\frac{4}{(1+\lambda)^2\pi}\int_1^R \left[(R^2-\lambda)\log\frac{R}{\rho}-\frac{(R^2-\rho^2)(\rho^2-\lambda)}{2\rho^2}\right]
\int_{C_{\rho}}\left(\abs{g_z}^2-\abs{g_{\bar z}}^2\right).
\end{equation*}
Adding up $I$ and $II$ we arrive at the formula
\begin{equation}\begin{split}\label{Re14}
\K^{\lambda}[U]&=\frac{4}{(1+\lambda)^2\pi}\iint_{\A}\left[(R^2-\lambda)\log\frac{R}{\rho}+\frac{(R^2-\rho^2)\lambda}{\rho^2}\right]\abs{g_z}^2\\
&+\frac{4}{(1+\lambda)^2\pi}\iint_{\A}\left[(R^2-\rho^2)-(R^2-\lambda)\log\frac{R}{\rho}\right]\abs{g_{\bar z}}^2.
\end{split}\end{equation}
We leave to the reader a routine task of verifying that the factor in from of $\abs{g_z}^2$ is nonnegative; that is,
\begin{equation}\label{Re15}
(R^2-\lambda)\log\frac{R}{\rho}+\frac{(R^2-\rho^2)\lambda}{\rho^2}\ge 0,
\end{equation}
whenever $1\le\rho\le R$ and $-1<\lambda\le 1$.

To establish the inequality~\eqref{Re5prime} it suffices to ensure that
\begin{equation*}
(R^2-\rho^2)-(R^2-\lambda)\log\frac{R}{\rho}\ge 0.
\end{equation*}
This expression, regarded as a function in $1\le\rho\le R$, is concave, vanishes at $\rho=R$,
and is nonnegative at $\rho=1$ by virtue of~\eqref{ass2}.
This completes the proof of Theorem~\ref{thgene1} in case $1<R\le e$.
\qed
\begin{remark}
As $\lambda$ decreases from $1$ to $-1$ the condition~\eqref{ass2} becomes more restrictive but it still holds for $R$ sufficiently close to $1$. For example, if $\lambda =0$, then~\eqref{ass2} certainly holds whenever $1<R\le 2$.
\end{remark}

\begin{remark}\label{rmuni2}
Suppose $1<R \le e$ and $h\in \Ha (\A, \A^\ast)$ with $R_\ast= \frac{1}{2} \left(R+\frac{1}{R}\right)$.  Since
$U(R) \le R_*^2$,
we must have $\lambda=1$ in~\eqref{Re5}. Also, $\K^1[U]\le 0$ because of~\eqref{Re4}.  By~\eqref{Re14} we have $g=\const$. Thus
\begin{equation}\label{equni2}
\Ha (\A, \A^\ast) = \{\alpha h^1 \colon \abs{\alpha}=1\} \quad \mbox{ where } 1<R \le e.
\end{equation}
\end{remark}

\subsection{The case $ e<R\le e^{3/2}$}\label{secnbproof2}

In this case we rely heavily on the orthogonal decomposition~\eqref{hfeq1}. The operator $\LL^{\lambda}$ and associated integral $\K^{\lambda}$ from the previous subsection will be used here only with $\lambda=1$ and denoted
simply as $\LL$ and $\K$.
Let us state here the relevant versions of identities~\eqref{Re1} and~\eqref{Re2}, namely
\begin{equation}\label{Re1r}
\K[U]:=\int_1^R\frac{\rho(R^2-\rho^2)}{\rho^2+1}\LL[U]\,d\rho
\end{equation}
and
\begin{equation}\label{Re2r}
\K[U]=\frac{2R^2}{R^2+1}U(R)-\frac{R^2+1}{2}U(1)-\frac{R^2-1}{2}\dot U(1).
\end{equation}

We require the following lemma, whose proof is postponed to the end of the section.

\begin{lemma}\label{LVest}
Suppose that $R>e$ and $h\in \Ha(\A, \As)$. Then
\begin{equation}\label{intLV}
\K[V]\ge (R^2-1)\abs{b_0}^2.
\end{equation}
\end{lemma}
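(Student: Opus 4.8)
The plan is to prove Lemma~\ref{LVest} by exploiting the orthogonal decomposition~\eqref{hfeq1} to reduce $\K[V]$ to a sum over the Fourier modes $n\ne 0$, evaluate $\K$ on each mode explicitly, and then bound each modal contribution from below. Recall from~\eqref{var1} and~\eqref{cm12} that $V(\rho)=\sum_{n\ne0}U_n(\rho)$ where $U_n(\rho)=U(\rho,h_n)=\abs{a_n\rho^n+b_n\rho^{-n}}^2$. Since $\LL=\LL^1$ is linear and the integral defining $\K$ in~\eqref{Re1r} is linear in $U$, we get $\K[V]=\sum_{n\ne0}\K[U_n]$, provided we can justify interchanging the (possibly infinite) sum with the integral; this is legitimate because each $\LL[U_n]\ge0$ by the computation in~\eqref{var3} (the mode-$n$ piece contributes $\frac{2}{\rho^2}(n^2-1)U_n\ge0$ for $\abs{n}\ge1$), so monotone convergence applies.

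The next step is to compute $\K[U_n]$ in closed form. Here I would use the boundary identity~\eqref{Re2r}, which after specializing $U\rightsquigarrow U_n$ and using $U_n(1)=\abs{a_n+b_n}^2$, $U_n(R)=\abs{a_nR^n+b_nR^{-n}}^2$, and $\dot U_n(1)=2\re[(na_n-nb_n)(\bar a_n+\bar b_n)]=2n(\abs{a_n}^2-\abs{b_n}^2)$, yields
\begin{equation*}
\K[U_n]=\frac{2R^2}{R^2+1}\abs{a_nR^n+b_nR^{-n}}^2-\frac{R^2+1}{2}\abs{a_n+b_n}^2-n(R^2-1)(\abs{a_n}^2-\abs{b_n}^2).
\end{equation*}
Expanding everything in terms of $\abs{a_n}^2$, $\abs{b_n}^2$, and $\re(a_n\bar b_n)$, one checks that the cross term $\re(a_n\bar b_n)$ has coefficient $\frac{4R^2}{R^2+1}-(R^2+1)=-\frac{(R^2-1)^2}{R^2+1}\le0$, so $\K[U_n]$ is minimized (over the phase of $a_n\bar b_n$, for fixed moduli) when $\re(a_n\bar b_n)=\abs{a_n}\abs{b_n}$; it then suffices to establish, for $\abs{n}\ge1$ and $R>e$, the elementary two-variable inequality obtained by setting $x=\abs{a_n}$, $y=\abs{b_n}$, namely that the resulting quadratic form in $x,y$ is nonnegative. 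The key sub-inequalities are $\frac{2R^{2n+2}}{R^2+1}-\frac{R^2+1}{2}-n(R^2-1)\ge0$ for $n\ge1$ (the coefficient of $x^2$) and $\frac{2R^{2-2n}}{R^2+1}-\frac{R^2+1}{2}+n(R^2-1)\ge0$ for $n\le-1$ (the coefficient of $y^2$, i.e. the $n\ge1$ statement after $a\leftrightarrow b$ symmetry); for $n=1$ the $x^2$-coefficient vanishes and one must look at the other coefficients. After this modal analysis, the worst case turns out to be $n=\pm1$, and since there $h_1=a_1z+b_1\bar z^{-1}$, the constraint $\abs{h}=1$ on $C_1$ ties $a_1,b_1$ to $b_0$: summing the surviving lower bounds from modes $n=\pm1$ produces exactly the term $(R^2-1)\abs{b_0}^2$ on the right-hand side, with all higher modes contributing nonnegatively.

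**The main obstacle** I anticipate is the case-checking for the low modes $n=\pm1$: when $\abs{n}=1$ the leading quadratic coefficient in $\K[U_n]$ degenerates (it is $0$ for the $x^2$-term when $n=1$), so $\K[U_1]$ reduces to a quadratic in $y=\abs{b_1}$ alone with a linear-in-$xy$ cross term, and one must extract precisely the $\abs{b_0}^2$ contribution rather than merely nonnegativity — this is where the hypothesis $R>e$ (equivalently $\log R>1$) is genuinely used, presumably through an inequality like $\frac{R^2+1}{2}-\frac{2R^{-2}\cdot R^2}{R^2+1}$ compared against $R^2-1$, or a sharper spectral estimate on the $n=1$ mode. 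I would first nail down the exact algebraic identity for $\K[U_1]+\K[U_{-1}]$ in terms of $b_0$ using~\eqref{var6}-type relations from the boundary condition (adapted: here $h$ need not satisfy $\abs{h}\equiv1$ on $C_1$ unless it is in $\Ha(\A,\As)$, which it is), and only then verify the remaining modes $\abs{n}\ge2$ give $\K[U_n]\ge0$ outright, which should be comparatively routine since the exponential growth $R^{2n}$ with $R>e$ dominates the linear term $n(R^2-1)$ comfortably once $\abs{n}\ge2$.
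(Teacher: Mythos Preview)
Your overall strategy---decompose $V=\sum_{n\ne0}U_n$, evaluate $\K[U_n]$ via~\eqref{Re2r}, and bound each mode from below---is indeed the paper's route. But the mechanism you propose for producing the term $(R^2-1)\abs{b_0}^2$ is mistaken, and this is a genuine gap.

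You expect the boundary condition $\abs{h}\equiv 1$ on $C_1$ to furnish ``\eqref{var6}-type relations'' tying $a_{\pm1},b_{\pm1}$ directly to $b_0$, so that $\K[U_1]+\K[U_{-1}]\ge (R^2-1)\abs{b_0}^2$. No such relation exists for a general $h\in\Ha(\A,\As)$: the identities~\eqref{var6} were derived only for the very special three-term map~\eqref{var4}. For arbitrary $h$ the modes $n=\pm1$ need not carry any information about $b_0$; in particular one can have $\abs{b_0}$ bounded away from zero while $U_{\pm1}$ is as small as one likes, so the inequality $\K[U_1]+\K[U_{-1}]\ge (R^2-1)\abs{b_0}^2$ is simply false in general.

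The actual link to $\abs{b_0}^2$ runs through \emph{all} modes simultaneously and requires a topological input you have not mentioned: Lemma~\ref{windh}. Combining $\dashint_{C_1}\abs{h}^2=1$ with the winding-number identity $\dashint_{C_1}\im(\bar h h_\theta)=1$ gives
\[
\sum_{n\ne0}(n-1)\abs{a_n+b_n}^2=\abs{b_0}^2,
\]
which is~\eqref{smoothh}. The paper then proves the \emph{stronger} modal inequality $\K[U_n]\ge (R^2-1)(n-1)\abs{a_n+b_n}^2$ for every $n\ne0$, not merely $\K[U_n]\ge0$. For $n\le 1$ this is immediate (the right side is $\le0$), so the genuinely hard modes are $n\ge 2$, not $n=\pm1$; there one must show a quadratic form in $a_n,b_n$ is positive definite, and this is exactly where the hypothesis $R>e$ enters (through the coefficients in~\eqref{ABC} and the determinant computation~\eqref{dn1}). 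Summing over $n$ then yields~\eqref{intLV}. Your plan to show only $\K[U_n]\ge0$ for $\abs{n}\ge2$ would not suffice.
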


\begin{proof}[Proof of Theorem~\ref{thgene1} for $e<R\le e^{3/2}$]
Inequality~\eqref{intLV} yields
\begin{equation}\label{p1}
\K[U]=\K[V]+\K[U_0]\ge (R^2-1)\abs{b_0}^2+\K[U_0].
\end{equation}
From~\eqref{Re2r} we have
\begin{equation*}%\label{p2}
\K[U_0]=
\frac{2R^2}{R^2+1}\abs{a_0\log R+b_0}^2-\frac{R^2+1}{2}\abs{b_0}^2 - 2\re(a_0\bar b_0)\frac{R^2-1}{2},
\end{equation*}
hence
\begin{equation}\begin{split}\label{p3}
\K[U]&\ge\frac{2R^2\log^2R}{R^2+1}\abs{a_0}^2 +\frac{R^4+2R^2-3}{2(R^2+1)}\abs{b_0}^2 \\
&+2\re(a_0\bar b_0)  \left(\frac{2R^2\log R}{R^2+1}-\frac{R^2-1}{2}\right).
\end{split}\end{equation}
Let us record for future use that~\eqref{p3} is valid whenever $R>e$, as the condition  $R\le e^{3/2}$ was not used yet.

The quadratic form with respect to $a_0$ and $b_0$ in the righthand side of~\eqref{p3} is positive  definite, provided that the quantity
\begin{equation}\label{p4}
\left(\frac{2R^2\log^2R}{R^2+1}\right)\left(\frac{R^4+2R^2-3}{2(R^2+1)}\right)-\left(\frac{2R^2\log R}{R^2+1}-\frac{R^2-1}{2}\right)^2
\end{equation}
is positive. Multiplying~\eqref{p4} by $4(R^2+1)$, we arrive at the function
\begin{equation}\label{p5}
\phi(R):=4R^2(R^2-3)\log^2R+8R^2(R^2-1)\log R-(R^2-1)(R^4-1)
\end{equation}
It remains to prove that  $\phi(R)>0$ for $e\le R\le e^{3/2}$. First compute
\[\phi(e)=13e^4-e^6-19e^2-1>0\quad \text{and }\quad \phi(e^{3/2})=22e^6-e^9-38e^3-1>0.\]
Since the second derivative
\[\begin{split}
\frac{d^2}{dR^2}(R^{-4}\phi(R))=&-\frac{2}{R^6}\big\{4R^4\log R+36R^2(\log^2R-\log R)\\ &+R^2(R^4-11)+10\big\}
\end{split}\]
is negative for $R\ge e$, it follows that $\phi(R)> 0$ for $e\le R\le e^{3/2}$.

Thus, $\K[U]\ge 0$, which by~\eqref{Re2r} yields
\[\frac{2R^2}{R^2+1}U(R)\ge \frac{R^2+1}{2}U(1)+\frac{R^2-1}{2}\dot U(1)\ge \frac{R^2+1}{2},\]
as required.\end{proof}

\begin{remark}\label{rmuni3}
Suppose $e< R \le e^{3/2}$ and $h\in \Ha (\A, \A^\ast)$ with $R_\ast= \frac{1}{2} \left(R+\frac{1}{R}\right)$.  Since $U(R) \le R_*^2$,
we have $\K[U]\le 0$ because of~\eqref{Re2r}. On the other hand, the quadratic form in~\eqref{p3} is strictly positive unless
$a_0=b_0=0$. Invoking Remark~\ref{rmuni1}, we arrive at
\begin{equation}\label{equni3}
\Ha (\A, \A^\ast) = \Ha_D (\A, \A^\ast)= \{\alpha h^1 \colon \abs{\alpha}=1\} \quad \mbox{ where } e<R \le e^{3/2}.
\end{equation}
\end{remark}

\begin{proof}[Proof of Lemma~\ref{LVest}]
Let us assume for now that $h$ is continuously differentiable up to the inner circle $C_1$; this
assumption will be removed later. It is easy to see that
\begin{equation}\label{smoothh}
\frac{1}{i}\dashint_{C_1}\bar h h_{\theta}-\dashint_{C_1}\abs{h}^2+\left|\dashint_{C_1}h\right|^2
=\sum_{n\ne 0}(n-1)\abs{a_n+b_n}^2.
\end{equation}
We claim that
\begin{equation}\label{intLUn}
\int_1^R\frac{\rho(R^2-\rho^2)}{\rho^2+1}\LL[U_n]\,d\rho\ge (R^2-1)(n-1)\abs{a_n+b_n}^2, \quad n\ne 0.
\end{equation}
Indeed, in Section~\ref{secvs} we found that the lefthand side of~\eqref{intLUn}
is nonnegative for $n\ne 0$. Thus, we only need to prove~\eqref{intLUn} for $n\ge 2$. Using
the identity~\eqref{Re2r}, we find
\begin{equation}\label{p10}\begin{split}
\int_1^R &\frac{\rho(R^2-\rho^2)}{\rho^2+1}\LL[U_n]\,d\rho-(R^2-1)(n-1)\abs{a_n+b_n}^2 \\
%&=\frac{2R^2}{R^2+1}U_n(R)-\frac{R^2+1}{2}U_n(1)-\frac{R^2-1}{2}U_n'(1)-(R^2-1)(n-1)\abs{a_n+b_n}^2 \\
&=\frac{1}{2(R^2+1)}\left\{A_n\abs{a_n^2}+B_n\abs{b_n}^2+2C_n\re(a_n\bar b_n)\right\},
\end{split}\end{equation}
where
\begin{equation}\begin{split}\label{ABC}
A_n&=4R^{2n+2}+(R^2-3)(R^2+1)-4n(R^4-1); \\
B_n&=4R^{2-2n}+(R^2-3)(R^2+1); \\
C_n&=-(R^2-1)(2n(R^2+1)-R^2-3).
\end{split}\end{equation}
Our goal is to show that the quadratic form in~\eqref{p10} is positive definite as long as
$n\ge 2$ and $R\ge e$. To this end, we can replace the coefficient $B_n$ with the smaller quantity
$\widetilde{B}_n=(R^2-3)(R^2+1)$. Since $R^2\ge e^2>3$, we have $\widetilde{B}_n>0$. Therefore, it remains
to prove that
\begin{equation}\label{dn1}\begin{split}
D(n,R)&:=A_n\widetilde{B}_n-C_n^2>0\quad \text{for }R\ge e,\ n\ge 2.
\end{split}\end{equation}
After a simplification,
\begin{equation*}%\label{dn2}
\begin{split}
D(n,R)= & \,4\big\{R^{2n+2}(R^4-2R^2-3)-n^2R^8 +(4n-2)R^6\\  &   +2n^2R^4+(6-4n)R^2-n^2\big\}.
\end{split}
\end{equation*}
First consider the case $n=2$:
\begin{equation*}%\label{dn21}
D(2,R)=4(R^2-1)(R^8-5R^6-2R^4+6R^2+4)>0
\end{equation*}
because $R^8\ge e^2R^6>7R^6$.
We will show that $D(n,R)$ is convex and increasing with respect to $n\ge 2$ for each $R\ge e$.
Indeed
\begin{equation*}%\label{dn3}
\frac{\D D(n,R)}{\D n}=8\big\{R^{2n+2}(R^4-2R^2-3)\log R-nR^8+2R^6+2nR^4-2R^2-n\big\}.
\end{equation*}
This derivative is positive at $n=2$, when it simplifies to
\begin{equation*}%\label{dn31}
\begin{split}
8&\left\{R^6(R^4-2R^2-3)\log R-2R^8+2R^6+4R^4-2R^2-2\right\} \\
&\ge 8\left\{R^6(R^4-2R^2-3)-2R^8+2R^6+4R^4-2R^2-2\right\} \\
&=8(R^2+1)(R^8-5R^6+4R^4-2)>0.
\end{split}\end{equation*}
This leads us to consider the second derivative
\begin{equation*}%\label{dn4}
\frac{\D^2 D(n,R)}{\D n^2}=16R^{2n+2}(R^4-2R^2-3)\log^2R-8(R^4-1)^2.
\end{equation*}
Since $R^4-2R^2-3=(R^2-3)(R^2+1)>0$, the second derivative is increasing with $n$. For $n=2$ it is equal to
\begin{equation*}%\label{dn41}
\begin{split}
&16R^{6}(R^4-2R^2-3)\log^2R-8(R^4-1)^2 \\
&\ge 16(R^{10}-2R^{8}-3R^6)-8(R^4-1)^2 \\
&=8(R^2+1)(2R^8-7R^6+R^4+R^2-1)
\end{split}\end{equation*}
which is positive since $R^8>7R^6$. Thus, $D(n,R)$ is convex and increasing with respect to $n\ge 2$.
This completes the proof of~\eqref{dn1} and therefore of~\eqref{intLUn}.

Summing~\eqref{intLUn} over $n\ne 0$ and using~\eqref{smoothh}, we obtain
\begin{equation}\label{smooth2}
\int_1^R\frac{\rho(R^2-\rho^2)}{\rho^2+1}\LL[V]\,d\rho\ge
(R^2-1)\left\{\frac{1}{i}\dashint_{C_1}\bar h h_{\theta}-\dashint_{C_1}\abs{h}^2+\left|\dashint_{C_1}h\right|^2\right\}
\end{equation}
By Lemma~\ref{windh} the righthand side of~\eqref{smooth2} is equal to $(R^2-1)\abs{b_0}^2$.

Now we remove the assumption that $h$ is smooth up to $C_1$. For $r\in (1,R/e)$ we can apply~\eqref{smooth2}
to the mapping $f\colon A(1,R/r)\to\C$ defined by $f(z)=h(rz)$. Using Lemma~\ref{windh}, we
conclude that
\[
\left( \frac{1}{i}\dashint_{C_1}\bar f f_{\theta}-\dashint_{C_1}\abs{f}^2+\left|\dashint_{C_1}f\right|^2 \right)
\longrightarrow \left|\dashint_{C_1}h\right|^2=\abs{b_0}^2
\]
as $r\searrow 1$. Also, substitution $\rho=t/r$ yields
\begin{equation*} \int_1^{R/r}\frac{\rho ((R/r)^2-\rho^2)}{\rho^2+1}\LL[V(\rho,f)]\,d\rho
=\frac{1}{r^2}\int_r^{R}\frac{t(R^2-t^2)}{t^2+r^2}\LL[V(t,h)]\,dt.
\end{equation*}
Recall that $\LL[V]\ge 0$ by Proposition~\ref{varprop1}.
Using the monotone convergence theorem, we conclude that
\[
\int_r^{R}\frac{t(R^2-t^2)}{t^2+r^2}\LL[V(t,h)]\,dt  \to \int_1^{R}\frac{t(R^2-t^2)}{t^2+1}\LL[V(t,h)]\,dt.
\]
as $r\searrow 1$. Thus, inequality~\eqref{smooth2} remains true without the smoothness assumption on $h$.
\end{proof}

\section{Proof of Theorem~\ref{thgene2}~\eqref{thgene22}}
The case $R\le e^{3/2}$ was already covered by Theorem~\ref{thgene1}. Thus we assume that $R>e$, in which case~\eqref{p3} is known to be true.
Since $h\in \Ha_N(\A,\As)$, we have $a_0=\dashint_{C_\rho}h_{\rho}=0$. Inequality~\eqref{p3} takes the form
\begin{equation}\label{notme}
\K[U]\ge \frac{R^4+2R^2-3}{2(R^2+1)}\abs{b_0}^2 \ge 0.
\end{equation}
It follows from~\eqref{Re2r} that
\[\frac{2R^2}{R^2+1}U(R) = \frac{R^2+1}{2}U(1)+ \frac{R^2-1}{2}\dot{U}(1)+\K^1[U]\ge \frac{R^2+1}{2}, \]
hence
\[U(R)\ge\left(\frac{R^2+1}{2R}\right)^2 \]
as required.\qed

\begin{remark}\label{rmuni4}
Suppose   $h\in \Ha_N (\A, \A^\ast)$ with $R_\ast= \frac{1}{2} \left(R+\frac{1}{R}\right)$.  Since
$U(R) \le R_*^2$, we have $\K[U]\le 0$ because of~\eqref{Re2r}. Contrasting this with~\eqref{notme}, we are led to the conclusion $b_0=0$. Invoking Remark~\ref{rmuni1}, we arrive at
\begin{equation}\label{equni4}
\Ha_N (\A, \A^\ast) = \Ha_D (\A, \A^\ast)= \{\alpha h^1 \colon \abs{\alpha}=1\}.
\end{equation}
\end{remark}
\section{Proof of Theorem~\ref{thmuni}}
Combining~\eqref{equni1},~\eqref{equni2},~\eqref{equni3} and~\eqref{equni4}, Theorem~\ref{thmuni} follows. \qed

\bibliographystyle{amsplain}

\begin{thebibliography}{9}

\bibitem{AIM}
K. Astala, T. Iwaniec, and G. J. Martin,  \textit{Deformations of
smallest mean distortion}. Arch. Ration. Mech. Anal., to appear.

%\bibitem{AIMb}
%K. Astala, T. Iwaniec, and G. J. Martin, \textit{Elliptic partial differential equations and quasiconformal mappings in the plane}, Princeton University Press, to appear.
%\bibitem{AJ}
%K. Astala, and J. J\"a\"askel\"ainen. \textit{Homeomorphic solutions to the reduced Beltrami equation}, Preprint (2008).

\bibitem{Dub}  P. Duren,  \textit{Harmonic mappings in the plane}, Cambridge Tracts in Mathematics, 156. Cambridge University
Press, Cambridge, 2004.

\bibitem{IMS}
T. Iwaniec, G. Martin, and C. Sbordone, \textit{$L^p$-integrability \& weak type $L^2$-estimates for the gradient of harmonic mappings of $\mathbb{D}$.} Discrete Contin. Dyn. Syst. Ser. B 11 (2009), no. 1, 145--152.

\bibitem{IO}
T. Iwaniec  and  J. Onninen,  \textit{$n$-Harmonic mappings between
annuli.}  Preprint.

\bibitem{Ka}
D. Kalaj,  \textit{On the Nitsche conjecture for harmonic mappings in $\R^2$ and $\R^3$.} Israel J. Math. {\bf 150} (2005), 241--251.

\bibitem{Ly}
A. Lyzzaik,  \textit{The modulus of the image annuli under univalent harmonic mappings and a conjecture of J.C.C. Nitsche},  J. London Math. Soc., {\bf 64} (2001), 369--384.

\bibitem{Nebook}
M. H. A. Newman, \textit{Elements of the topology of plane sets of points},
2nd ed. Cambridge University Press, 1951.

\bibitem{Ni}
J. C. C. Nitsche,   \textit{On the modulus of doubly connected regions under harmonic mappings},  Amer. Math. Monthly,  {\bf 69} (1962), 781--782.

\bibitem{Sc}
F. H. Schottky,  \textit{\"Uber konforme Abbildung von mehrfach
zusammenh\"angenden Fl\"ache.}  J. f\"ur Math., {\bf 83} (1877).

\bibitem{We}  A. Weitsman,  \textit{Univalent harmonic mappings of annuli and a conjecture of J.C.C. Nitsche}, Israel J. Math., {\bf 124}  (2001),  327--331.


\end{thebibliography}

\end{document}